\theoremstyle{plain}
\newtheorem{theorem}{Theorem}[section]
\newtheorem{corollary}[theorem]{Corollary}
\newtheorem{proposition}[theorem]{Proposition}
\newtheorem{lemma}[theorem]{Lemma}
\theoremstyle{definition}
\newtheorem{definition}[theorem]{Definition}
\newtheorem{remark}[theorem]{Remark}
\numberwithin{equation}{section}
\tikzstyle{qnode}=[circle,draw=black,thick, inner sep=1pt]
\tikzstyle{knode}=[circle,draw=black,thick, text width = 12 pt, align=center, inner sep=0pt]
\tikzstyle{hnode}=[circle,text width = 15 pt,align=center,inner sep=1pt]
\begin{document}

\title[Quasigroup words and reversible automata]
{Quasigroup words and reversible automata}
\author[J.D.H.~Smith]{Jonathan D.H.~Smith$^1$}
\author[S.G.~Wang]{Stefanie G.~Wang$^2$}
\address{$^1$ Dept. of Mathematics\\
Iowa State University\\
Ames, Iowa 50011, U.S.A.}
\address{$^2$ Dept. of Mathematics \& Statistics\\
Smith College\\
Northampton, Massachusetts 01063, U.S.A.}
\email{$^1$jdhsmith@iastate.edu\phantom{,}}
\email{$^2$stwang@smith.edu}

\keywords{quasigroup, semisymmetric, reversible automaton, triality}
\subjclass[2010]{20N05; 08A68, 68Q70}

\begin{abstract}
This paper examines two related topics: the linearization of the reversible automata of Gvaramiya and Plotkin, and the problem of finding a faithful representation of the words in a central quasigroup that respects the triality symmetry of the language of quasigroups.
\end{abstract}

\maketitle


\section{Dedication} This paper is written as the first author's homage to his long-time friend Ivo Rosenberg, whom he met initially at the 25-th \textit{Arbeitstagung Allgemeine Algebra}, held in May, 1978 at the University of Klagenfurt in the Austrian province of Carinthia. At that time, universal algebra had fallen under the cloud of Graham Higman's review of Paul Cohn's text, with its notorious comment on the subject that ``nobody should specialize in it'' \cite{Higman}. In this climate, encountering Ivo's deep and powerful masterpiece \cite{Rosenberg} came as a ray of brilliant sunshine, placing the duality between relations and clones firmly in the long tradition of Felix Klein's view of geometry as a duality between relations and their automorphism groups. Even Higman worked this vein \cite{Higman2}.

Ivo Rosenberg was an inspiring mathematician, not only in his work, but also in his gentle and open personality. The first author was fortunate to receive Ivo's hospitality at a time when he was in visa limbo, waiting in Montreal to  start a tenure-track job at Iowa State University. Ivo had made his own Atlantic crossing when Warsaw Pact forces invaded Czechoslovakia after the Prague Spring. Working in the Sudan at the time, he was able to move to Canada to avoid the repression sweeping through his native land. After settling in Montreal, he was involved with a number of important conferences and workshops that brought together mathematicians working in logic, algebra, combinatorics, and computer science. The meetings had a lasting influence on all these areas, an influence willingly accepted and gratefully acknowledged by the authors of this paper.

\section{Introduction}

We examine two apparently separate topics that turn out to be related: the linearization of the reversible automata of Gvaramiya and Plotkin, and the problem of finding a faithful representation of the words in a central (or linear) quasigroup that behaves well with respect to the triality symmetry ($S_3$-action or parastrophy) of the language of quasigroups.

Let $\mathbf{Qtp}$ be the category of quasigroup homotopies. There have been two approaches to the problem of realizing homotopies between quasigroups as homomorphisms of algebraic structure. The first approach, which is due to Gvaramiya and Plotkin \cite{Gv,GvPl}, used heterogeneous algebras implementing \emph{reversible automata} (Section~\ref{S:revrsaut}). The second approach establishes a functor $\Delta\colon\mathbf{Qtp}\to\mathbf P$, which is known as \emph{semisymmetrization}, to the category $\mathbf P$ of homomorphisms between semisymmetric quasigroups, quasigroups that satisfy the identity $x(yx)=y$ (Section~\ref{S:Backgrnd}). The image of the functor $\Delta$ consists of semisymmetric quasigroups with an extra structure, that of a \emph{semisymmetrized algebra} (Section~\ref{S:SeSymAlg}). Theorem~\ref{T:LinSemiS} describes the structure of linear semisymmetrized algebras. Theorem~\ref{T:LiSSARvA} then identifies the linear reversible automaton that underlies a linear semisymmetrized algebra.

An algebra $Q$ is called \emph{central} if the diagonal $\widehat{Q}=\{(x,x)\mid x\in Q\}$ is a normal subalgebra (congruence class) in the direct square $Q\times Q$. (This is the terminology of \cite{MV}, not to be confused with the central relations of \cite{Rosenberg}.) Following our enumeration of words in free quasigroups \cite{JSW}, we have now begun to consider the enumeration of words in free central quasigroups. These enumerations are critically dependent on the triality symmetry of the language of quasigroups (Section~\ref{S:triality}), which relates the various conjugates or parastrophes of a quasigroup. However, the usual linear representations of central quasigroups (\cite[\S3.2]{RTIGFQ},  \cite[Prop.~11.1]{IQTR}, cf. Proposition~\ref{P:CePiFrGp}) do not behave nicely with respect to the triality symmetry. This has presented an obstacle to our attempts at enumeration of free central quasigroup words.

The linear reversible automata of Theorem~\ref{T:LiSSARvA}(a) come to the rescue, as their operations \eqref{E:ratlinSS} suggest alternative linear expressions for the central quasigroup operations, embodied in Theorem~\ref{T:CePiFrGp}. These expressions do transform homogeneously under the triality symmetry. An added benefit is that the integer coefficients of the polynomial representations of free central quasigroup words are all nonnegative, which is certainly not the case for the traditional representations from Proposition~\ref{P:CePiFrGp}.

Imposing centrality leads to equivalences, such as 
$$
((a_0/a_1).a_2a_3)/(a_4\backslash a_0)
=(a_4.a_2a_3)/a_1
$$
\eqref{E:TwoWords}, between reduced words that have different lengths in an absolutely free quasigroup. The new representation furnished by Theorem~\ref{T:CePiFrGp} provides an easy way to identify when such unexpected cancellations occur, in the form of the argument elimination patterns presented in \S\ref{SS:ArgtElim} and Figure~\ref{F:PatOfCol}.

In \S\ref{SS:16shrtst}, a particular $2$-dimensional representation from Theorem~\ref{T:CePiFrGp} is used to provide a geometric display (Figure~\ref{F:16shrtst}) of the sixteen shortest words in the free central quasigroup on a singleton $\{a\}$. Exact triality symmetry between these words translates to an approximate symmetry in the positions of their display points.

The paper generally follows the notation and conventions of \cite{SRPMA}. In particular, we default to algebraic notation with functions and functors following their arguments, either on the line or as a superfix.

\section{Quasigroups, homotopy, and semisymmetry}\label{S:Backgrnd}

\subsection{Quasigroups}\label{SS:quasigps}

An \emph{equational quasigroup} $(Q, \cdot,/,\backslash)$ is a set equipped with three \emph{basic} binary operations, multiplication $\cdot$, right division /, and left division $\backslash$ such that for all $x, y\in Q$, the following identities are satisfied:
\begin{equation}\label{E:QgpIdens}
 \begin{array}{lll}
( \mbox{SL} ) & x\cdot(x\backslash y) = y \, ;\qquad
( \mbox{SR} ) & y = (y/x)\cdot x \, ; \\
( \mbox{IL} ) & x\backslash (x\cdot y) = y \, ;\qquad
( \mbox{IR} ) & y = (y\cdot x)/x \, .
 \end{array}
\end{equation}
Note that (IL), (IR) give the respective injectivity of the left multiplication
$$
L(x)\colon Q\to Q;y\mapsto xy
$$
and right multiplication
$$
R(x)\colon Q\to Q;y\mapsto yx
$$
for $x\in Q$, while (SL), (SR) give their surjectivity. Thus an equational quasigroup $(Q,\cdot,/,\backslash)$ yields a combinatorial quasigroup $(Q,\cdot)$. Conversely, a combinatorial quasigroup $(Q,\cdot)$ yields an equational quasigroup $(Q,\cdot,/,\backslash)$ with $x/y=xR(y)^{-1}$ and $x\backslash y=yL(x)^{-1}$.

In an equational quasigroup $(Q,\cdot,/,\backslash)$, the three equations
\begin{equation}\label{E:firsthre}
x_1\cdot x_2=x_3\, ,\qquad x_3/x_2=x_1\, ,\qquad x_1\backslash x_3=x_2
\end{equation}
involving the basic operations are equivalent. Now consider the \emph{opposite} operations
$$
x\circ y=y\cdot x\, ,\qquad x/\negthinspace/y=y/x\, ,\qquad x\backslash\negthinspace\backslash y=y\backslash x
$$
on $Q$. Then the equations \eqref{E:firsthre} are further equivalent to the equations
$$
x_2\circ x_1=x_3\, ,\qquad x_2/\negthinspace/x_3=x_1\, ,\qquad x_3\backslash\negthinspace\backslash x_1=x_2\, .
$$
Thus each of the basic and opposite operations
\begin{equation}\label{E:conjugac}
(Q,\cdot),\quad(Q,/),\quad(Q,\backslash),
\quad(Q,\circ),\quad(Q,/\negthinspace/),\quad(Q,\backslash\negthinspace\backslash)
\end{equation}
forms a (combinatorial) quasigroup. In particular, note that the identities (IR) in $(Q,\backslash)$ and (IL) in (Q,/) yield the respective identities
$$
 \begin{array}{ll}
( \mbox{DL} ) & x/(y\backslash x)=y \, ,\index{DL@(DL)}\\
( \mbox{DR} ) & y=(x/y)\backslash x \index{DR@(DR)}
 \end{array}
$$
in the basic quasigroup divisions. The six quasigroups \eqref{E:conjugac} are known as the \emph{conjugates}, ``parastrophes'' \cite{Sade57a} or ``derived quasigroups'' \cite{James} of $(Q,\cdot)$.

\subsection{The homotopy category}

A \emph{homotopy} $(f_1,f_2,f_3)\colon Q\to Q'$ from a quasigroup $Q$ to a quasigroup $Q'$ is a triple of functions from $Q$ to $Q'$ such that
$$
xf_1\cdot yf_2=(x\cdot y)f_3
$$
for all $x,y$ in $Q$. Write $\mathbf Q$ for the category of homomorphisms between quasigroups, and $\mathbf{Qtp}$ for the category of homotopies between quasigroups. Then there is a forgetful functor\index{S i@$\Sigma$ (homomorphism as homotopy)}
\begin{equation}\label{E:qtoqtp}
\Sigma:\textbf{Q}\rightarrow\textbf{Qtp}
\end{equation}
preserving objects, sending a quasigroup homomorphism $f:Q\rightarrow Q'$ to the homotopy $(f,f,f):Q\rightarrow Q'$. A function $f:Q\rightarrow Q'$ connecting the underlying sets of equational quasigroups $(Q,\cdot,/,\backslash)$ and $(Q',\cdot,/,\backslash)$ is a quasigroup homomorphism if it is a homomorphism $f:(Q,\cdot)\rightarrow(Q',\cdot)$ for the multiplications. Thus a homotopy $(f_1,f_2,f_3)$ having equal components $f_1=f_2=f_3$ is an element of the image of the morphism part of the forgetful functor \eqref{E:qtoqtp}.

\subsection{Semisymmetric quasigroups}

A quasigroup is \emph{semisymmetric} if it satisfies the identity $x\cdot yx=y$. (Compare \cite[Ex. 9]{Sm130} for an interpretation of this identity in terms of the semantic triality of quasigroups.) Using right and left multiplications, semisymmetry amounts to the equality
\begin{equation}\label{E:R(x)L(x)}
R(x)=L(x)^{-1}\,,
\end{equation}
and may thus be expressed in equivalent form as $xy\cdot x=y$. The simplest models of semisymmetric quasigroups are abelian groups with $-x-y$ as the multiplication operation.

\subsection{Semisymmetrization}

Let $\mathbf{P}$ denote the category of homomorphisms between semisymmetric quasigroups. Then each quasigroup $Q$ or $(Q,\cdot,/,\backslash)$ defines a semisymmetric quasigroup structure $Q\Delta$ on the direct cube $Q^3$ with multiplication as follows:
\begin{equation}\label{E:semisym}
\begin{matrix}
\phantom{-}\\
(\phantom{m} x_1\phantom{n},\phantom{m} x_2\phantom{m},\phantom{m} x_3\phantom{m})\phantom{l}\cdot\phantom{l}\\
\phantom{-}\\
(\phantom{m} y_1\phantom{m},\phantom{m} y_2\phantom{m},\phantom{m} y_3\phantom{m})=\\
\phantom{-}\\
(x_2/\negthinspace/y_3,x_3\backslash\negthinspace\backslash y_1,x_1\cdot y_2)\phantom{|.}\\
\phantom{-}\\
\end{matrix}
\end{equation}
\cite{Sm63}. If $(f_1,f_2,f_3):(Q,\cdot)\rightarrow(Q',\cdot)$ is a quasigroup homotopy, define
\begin{equation}\label{E:deltaonf}
(f_1,f_2,f_3)^{\Delta}:Q\Delta\rightarrow Q'\Delta;(x_1,x_2,x_3)\mapsto(x_1f_1,x_2f_2,x_3f_3).
\end{equation}
This map is a quasigroup homomorphism. Indeed, for $(x_1,x_2,x_3)$ and $(y_1,y_2,y_3)$ in $Q\Delta$, one has
\begin{alignat*}{1}
(x_1&f_1,x_2f_2,x_3f_3)\cdot(y_1f_1,y_2f_2,y_3f_3)\\
&=(x_2f_2/\negthinspace/y_3f_3,x_3f_3\backslash\negthinspace\backslash y_1f_1,x_1f_1\cdot y_2f_2)\\
&=\big((x_2/\negthinspace/y_3)f_1,(x_3\backslash\negthinspace\backslash y_1)f_2,(x_1\cdot y_2)f_3\big)\\
&=\big((x_1,x_2,x_3)\cdot(y_1,y_2,y_3)\big)(f_1,f_2,f_3)^{\Delta},
\end{alignat*}

Consider the functor
\begin{equation}\index{D a@$\Delta$ (semisymmetrization)}
\Delta:\mathbf{Qtp}\rightarrow\mathbf{P},
\end{equation}
known as the \textit{semisymmetrization functor}\index{semisymmetrization functor}, which has object part \eqref{E:semisym} and morphism part \eqref{E:deltaonf}. This functor has a left adjoint, namely the restriction $\Sigma:\mathbf{P}\rightarrow\mathbf{Qtp}$ of the forgetful functor \eqref{E:qtoqtp} \cite[Th. 5.2]{Sm63}. The unit of the adjunction at a semisymmetric quasigroup $P$ is the homomorphism
\begin{equation}\label{E:unit}
\eta_P:P\to P\Sigma\Delta;x\mapsto (x,x,x)
\end{equation}
\cite[(5.3)]{Sm63}. The counit $\varepsilon_Q$ at a quasigroup $Q$ is the homotopy
\begin{equation}\label{E:counit}
(\pi_1,\pi_2,\pi_3):Q\Delta\Sigma\to Q
\end{equation}
with $(x_1,x_2,x_3)\pi_i=x_i$ for $1\le i\le 3$ \cite[(5.4)]{Sm63}.

\section{Triality}\label{S:triality}

\subsection{Triality action of the symmetric group $S_3$}

The symmetric group $S_3$ on the $3$-element set $\{1,2,3\}$ is presented as
$$
\big\langle \sigma,\tau \,\big\vert\, \sigma^2=\tau^2=(\sigma\tau)^3=1 \big\rangle
$$
writing $\sigma$ and $\tau$ for the respective transpositions $(12)$ and $(23)$. The Cayley diagram of the presentation is
\begin{equation}\label{E:cayls3st}
\begin{matrix}
1  &\Longleftrightarrow   &\tau   &\longleftrightarrow     &\tau\sigma  \\
\updownarrow   &            &       &                   &\Updownarrow   \\
\sigma &\Longleftrightarrow   &\sigma\tau  &\longleftrightarrow     &\sigma\tau\sigma
\end{matrix}
\end{equation}
with $\leftrightarrow$ for right multiplication by $\sigma$ and $\Leftrightarrow$ for right multiplication by $\tau$.

Now consider the full set
\begin{equation}\label{E:conjugat}
\{\cdot, \backslash, /\negthinspace/, /, \backslash\negthinspace\backslash, \circ\}
\end{equation}
of all quasigroup operations, both basic and opposite. For current purposes, it will be convenient to use postfix notation for binary operations, setting $x\cdot y=xy\,\mu$ and rewriting the first equation of \eqref{E:firsthre} in the form
\begin{equation}\label{E:x1x2mux3}
x_1x_2\,\mu =x_3 \, .
\end{equation}
The full set \eqref{E:conjugat} is construed as the homogeneous space
\begin{equation}\label{E:homospop}
\mu^{S_3}=\{\mu^g\mid g\in S_3\}
\end{equation}
for a regular right permutation action of the symmetric group $S_3$, such that \eqref{E:x1x2mux3} is equivalent to
\begin{equation}\label{E:x1gx2gmg}
x_{1g}x_{2g}\mu^g=x_{3g}
\end{equation}
for each $g$ in $S_3$. This action is known as \emph{triality}.\footnote{Triality in this sense is not to be confused with the distinct but related notion that ultimately arises from the action of $S_3$ on the Dynkin diagram $D_4$ (compare, say, \cite{HaNa}).} The six binary operations, in their positions corresponding to the Cayley diagram \eqref{E:cayls3st}, are displayed in Figure~\ref{F:trialops}.
\vskip .2in
\begin{figure}[bht]
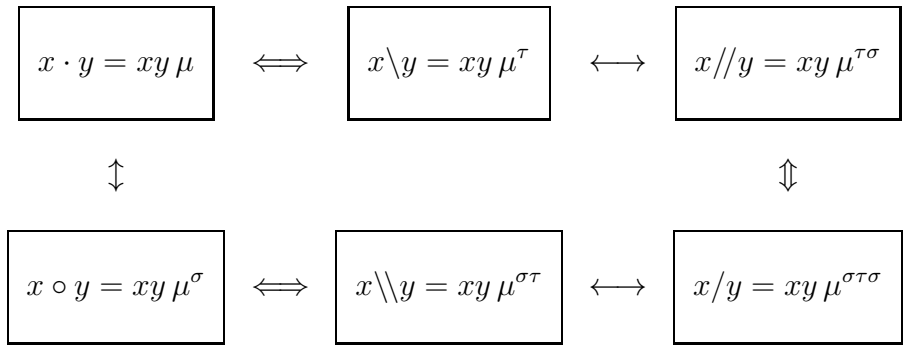

$$
\begin{matrix}
\boxed{\phantom{\Bigg|}x\cdot y=xy\,\mu\phantom{\Bigg|}}
&\Longleftrightarrow
&\boxed{\phantom{\Bigg|}x\backslash y=xy\,\mu^{\tau}\phantom{\Bigg|}}
&\longleftrightarrow
&\boxed{\phantom{\Bigg|}x/\negthinspace/y=xy\,\mu^{\tau\sigma}\phantom{\Bigg|}}
\\
&            &       &                   & \\
\updownarrow   &            &       &                   &\Updownarrow   \\
&            &       &                   & \\
\boxed{\phantom{\Bigg|}x\circ y=xy\,\mu^{\sigma}\phantom{\Bigg|}}
&\Longleftrightarrow
&\boxed{\phantom{\Bigg|}x\backslash\negthinspace\backslash y=xy\,\mu^{\sigma\tau}\phantom{\Bigg|}}
&\longleftrightarrow
&\boxed{\phantom{\Bigg|}x/y=xy\,\mu^{\sigma\tau\sigma}\phantom{\Bigg|}}
\end{matrix}
$$
\vskip .3in
\caption{Symmetry of the quasigroup operations.
}\label{F:trialops}
\end{figure}
In the figure, the opposite of each operation $\mu^g$ is given by $\mu^{\sigma g}$. It follows that passage to the opposite operation corresponds to left multiplication by the transposition $\sigma$ in the symmetric group $S_3$. The three pairs of opposite operations lie in the respective columns of Figure~\ref{F:trialops}.

\subsection{Invertible elements of the monoid of binary words}

Just as the left multiplication by $\sigma$ within the symmetric group $S_3$ allows for a simple interpretation, so does the left multiplication by $\tau$. Let $M$ be the complete set of all derived binary operations on a quasigroup. Taken together, these operations constitute the free algebra on two generators $x$, $y$ in the variety $\mathbf{Q}$ of quasigroups. A multiplication $*$ is defined on $M$ by
\begin{equation}\label{E:mltonbin}
xy(\alpha*\beta)=x\,xy\alpha\,\beta\, .
\end{equation}
The right projection $xy\epsilon=y$ also furnishes a binary operation $\epsilon$.

\begin{lemma}
The set $M$ of all derived binary quasigroup operations forms a monoid $(M,*,\epsilon)$ under the multiplication \eqref{E:mltonbin}, with identity element $\epsilon$.
\end{lemma}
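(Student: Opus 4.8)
The plan is to identify $M$ with the free quasigroup $F$ on the two generators $x,y$, as the excerpt already indicates, and to recognise the product \eqref{E:mltonbin} as substitution of one derived operation into another. Under the correspondence sending a derived operation $\alpha$ to its value $xy\alpha\in F$ at the generators, the right projection $\epsilon$ corresponds to the generator $y$, since $xy\epsilon=y$. For each $\alpha\in M$ let $\phi_\alpha\colon F\to F$ be the unique endomorphism of the free quasigroup determined by $x\phi_\alpha=x$ and $y\phi_\alpha=\alpha$, which exists and is unique by the universal property of $F$. Reading the right-hand side of \eqref{E:mltonbin} as the operation $\beta$ evaluated at the ordered pair $\bigl(x,\,xy\alpha\bigr)=(x,\alpha)$, I would record the key reformulation
$$
\alpha*\beta=\beta\phi_\alpha\,,
$$
that is, $\alpha*\beta$ is $\beta$ with its second argument replaced by $\alpha$.

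With this reformulation the three monoid axioms become transparent. Closure is immediate: $\beta\phi_\alpha$ is obtained by substituting derived operations into a derived operation, hence is again a derived binary operation and lies in $M$. For the identity laws I would compute directly from \eqref{E:mltonbin}, namely $xy(\alpha*\epsilon)=x\,(xy\alpha)\,\epsilon=xy\alpha$ using $uv\epsilon=v$, so that $\alpha*\epsilon=\alpha$; and $xy(\epsilon*\beta)=x\,(xy\epsilon)\,\beta=xy\beta$ using $xy\epsilon=y$, so that $\epsilon*\beta=\beta$. Equivalently, $\phi_\epsilon$ is the identity endomorphism and $\epsilon\phi_\alpha=y\phi_\alpha=\alpha$.

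For associativity, the cleanest route is to note that substitution composes: the endomorphism fixing $x$ and sending $y\mapsto\beta\phi_\alpha=\alpha*\beta$ is exactly the composite $\phi_\beta\phi_\alpha$, whence $\phi_{\alpha*\beta}=\phi_\beta\phi_\alpha$. Therefore
$$
(\alpha*\beta)*\gamma=\gamma\phi_{\alpha*\beta}=\gamma\phi_\beta\phi_\alpha=(\beta*\gamma)\phi_\alpha=\alpha*(\beta*\gamma)\,,
$$
so that associativity is inherited from composition of endomorphisms of $F$. Alternatively I would verify associativity by a bare-hands substitution: both $(\alpha*\beta)*\gamma$ and $\alpha*(\beta*\gamma)$ reduce, via \eqref{E:mltonbin}, to the single element $x\,\bigl(x\,(xy\alpha)\,\beta\bigr)\,\gamma$ of $F$, so the two products coincide.

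The content of the lemma is thus almost entirely bookkeeping, and the one point genuinely requiring care is the correct parsing of the postfix product \eqref{E:mltonbin}: one must read $x\,xy\alpha\,\beta$ as $\beta$ with first argument $x$ and second argument $xy\alpha$, not the reverse, since it is precisely this asymmetry in the first slot that produces the substitution $y\mapsto\alpha$ rather than a symmetric composite. Once this is fixed, well-definedness of $*$ on $M$ is automatic, because substitution is realised by honest endomorphisms of $F$, which respect the quasigroup identities by construction; I expect no further obstacle. As a structural remark I would add that $\alpha\mapsto\phi_\alpha$ injectively embeds $(M,*,\epsilon)$ into the opposite of the endomorphism monoid of $F$, which recovers all three axioms at once.
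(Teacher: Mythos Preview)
Your proof is correct. The paper takes the minimal direct route: it verifies $\alpha*\epsilon=\alpha$ and $\epsilon*\alpha=\alpha$ by the same one-line computations you give, and checks associativity by reducing both $(\alpha*\beta)*\gamma$ and $\alpha*(\beta*\gamma)$ to the common expression $xxxy\alpha\beta\gamma$, which is exactly your ``bare-hands'' alternative.

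Your primary argument is a genuinely different, more structural, packaging: you realise $*$ as $\alpha*\beta=\beta\phi_\alpha$ for the endomorphism $\phi_\alpha$ of the free quasigroup $F$ fixing $x$ and sending $y\mapsto\alpha$, and then import associativity from composition in $\mathrm{End}(F)$ via $\phi_{\alpha*\beta}=\phi_\beta\phi_\alpha$. This buys an explanation of \emph{why} the monoid law holds (substitution is functorial) and, as you note, an antihomomorphic embedding of $(M,*,\epsilon)$ into $\mathrm{End}(F)$. The paper's direct computation is shorter and entirely self-contained, needing no appeal to the universal property of $F$; yours is more conceptual and makes the later claim in Proposition~\ref{T:hyperqgp}, that certain $\mu^g$ are units, feel inevitable, since units of $M$ correspond to automorphisms of $F$ fixing $x$.
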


\begin{proof}
Observe that
$$
xy(\alpha*\epsilon)=x\,xy\alpha\,\epsilon=xy\alpha
\quad
\mbox{ and }
\quad
xy(\epsilon*\alpha)=x\,xy\epsilon\,\alpha=xy\alpha
$$
for $\alpha$ in $M$, so $\epsilon$ is an identity element. Consider $\alpha$, $\beta$, $\gamma$ in $M$. Then
\begin{align*}
xy\big((\alpha*\beta)*\gamma\big)&=x\,xy(\alpha*\beta)\,\gamma
\\
&
=xxxy\alpha\beta\gamma
\\
&
=x\,xy\alpha(\beta*\gamma)=xy\big(\alpha*(\beta*\gamma)\big) \, ,
\end{align*}
confirming the associativity of the multiplication \eqref{E:mltonbin}.
\end{proof}

The significance of the left multiplication by $\tau$ then follows.

\begin{proposition}\label{T:hyperqgp}
For each element $g$ of $S_3$, the binary operation $\mu^g$ is a unit of the monoid $M$, with inverse $\mu^{\tau g}$.
\end{proposition}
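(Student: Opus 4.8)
The plan is to show that each $\mu^g$ has $\mu^{\tau g}$ as a two-sided inverse in $(M,*,\epsilon)$; that is, to verify the two identities $\mu^g*\mu^{\tau g}=\epsilon$ and $\mu^{\tau g}*\mu^g=\epsilon$ for every $g\in S_3$. Because $\tau^2=1$, a single such pair of identities is already self-consistent: applying the conclusion with $\tau g$ in place of $g$ returns the inverse $\mu^{\tau(\tau g)}=\mu^g$, so it suffices to treat one arbitrary fixed $g$. The base case $g=1$ is just the familiar fact that $\mu^{1}=(\cdot)$ has left division $\mu^{\tau}=(\backslash)$, governed by (IL) and (SL); the strategy is to run exactly this argument uniformly.

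The crux is to identify the relationship between $\mu^g$ and $\mu^{\tau g}$ dictated by triality. First I would read off from the defining equivalence \eqref{E:x1gx2gmg} what $\mu^{\tau g}$ does, by substituting $\tau g$ for $g$ and evaluating the indices through $\tau=(23)$. Since $1\tau=1$, $2\tau=3$ and $3\tau=2$, one has $x_{1,\tau g}=x_{1g}$, $x_{2,\tau g}=x_{3g}$ and $x_{3,\tau g}=x_{2g}$, so the defining equation for $\mu^{\tau g}$ reads $x_{1g}x_{3g}\,\mu^{\tau g}=x_{2g}$ precisely when $x_{1g}x_{2g}\,\mu^g=x_{3g}$. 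Writing $a=x_{1g}$, $b=x_{2g}$, $c=x_{3g}$, this is the equivalence
$$
ab\,\mu^g=c \iff ac\,\mu^{\tau g}=b,
$$
valid for all $a,b,c\in Q$. In other words, $\mu^{\tau g}$ is exactly the left-division operation of the conjugate quasigroup $(Q,\mu^g)$. This step is where care is needed, as it hinges on getting the bookkeeping of the $S_3$-action right, and I expect it to be the main obstacle.

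From the displayed equivalence I would extract the two cancellation laws that play the roles of (IL) and (SL): taking $c=ab\,\mu^g$ gives $a\,(ab\,\mu^g)\,\mu^{\tau g}=b$, while taking $b=ac\,\mu^{\tau g}$ gives $a\,(ac\,\mu^{\tau g})\,\mu^g=c$, both for all $a,b,c$. It then remains only to feed these into the definition \eqref{E:mltonbin} of $*$. With $a=x$ and $b=y$,
$$
xy(\mu^g*\mu^{\tau g})=x\,(xy\,\mu^g)\,\mu^{\tau g}=y,
$$
and with $a=x$ and $c=y$,
$$
xy(\mu^{\tau g}*\mu^g)=x\,(xy\,\mu^{\tau g})\,\mu^g=y,
$$
so both products act as the right projection $\epsilon$. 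Hence $\mu^g$ is a unit of $M$ with inverse $\mu^{\tau g}$, as claimed. As a sanity check, one can confirm the resulting three inverse pairs $\{\cdot,\backslash\}$, $\{\circ,/\negthinspace/\}$ and $\{\backslash\negthinspace\backslash,/\}$ directly from the identities (SL), (SR), (IL), (IR), (DL), (DR) of Section~\ref{SS:quasigps}.
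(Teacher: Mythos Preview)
Your argument is correct, and it differs genuinely from the paper's. The paper proceeds by explicit case analysis: it treats each of the three $\langle\tau\rangle$-cosets in $S_3$ separately, rewriting (IL)/(SL), then (IR)/(SR), then (DR)/(DL) into the form $\mu^g*\mu^{\tau g}=\epsilon=\mu^{\tau g}*\mu^g$ for $g\in\{1,\sigma,\sigma\tau\}$. You instead give a single uniform argument directly from the defining triality equivalence \eqref{E:x1gx2gmg}: reading off that $\mu^{\tau g}$ is the left division of the conjugate $(Q,\mu^g)$ yields the pair of cancellation laws in one stroke, and the monoid identities follow immediately from \eqref{E:mltonbin}. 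Your route is cleaner and makes transparent that the result is a formal consequence of how $\tau$ acts, with no need to unwind to the six named identities; the paper's route, on the other hand, has the advantage of exhibiting exactly which classical identity corresponds to each instance, which is what feeds directly into Corollary~\ref{C:hyperqgp}. Your opening remark about $\tau^2=1$ is harmless but unnecessary, since your subsequent argument already handles all $g$ uniformly.
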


\begin{proof}
The (IL) identity $x\backslash(x\cdot y)=y$ becomes $x\,xy\mu\,\mu^{\tau}=y$ or $\mu*\mu^{\tau}=\epsilon$. Similarly the (SL) identity $x\cdot(x\backslash y)=y$ becomes $x\,xy\mu^{\tau}\,\mu=x$ or $\mu^{\tau}*\mu=\epsilon$. This means that $\mu$ and $\mu^{\tau}$ are mutual inverses.

The (IR) identity $(y\cdot x)/x=y$ is $x/\negthinspace/(x\circ y)=y$. This becomes $x\,xy\mu^{\sigma}\,\mu^{\tau\sigma}=y$ or $\mu^{\sigma}*\mu^{\tau\sigma}=\epsilon$. Similarly (SR), namely $(y/ x)\cdot x=y$, may be written as $x\circ(x/\negthinspace/ y)=y$. This becomes $x\,xy\mu^{\tau\sigma}\,\mu^{\sigma}=y$ or $\mu^{\tau\sigma}*\mu^{\sigma}=\epsilon$. Thus $\mu^{\sigma}$ and $\mu^{\tau\sigma}$ are mutual inverses.

The (DR) identity $(x/y)\backslash x=y$ is $x\backslash\negthinspace\backslash (x/y)=y$, which translates to $x\,xy\mu^{\tau\sigma\tau}\,\mu^{\sigma\tau}=y$ or $\mu^{\tau\sigma\tau}*\mu^{\sigma\tau}=\epsilon$. Finally, the (DL) identity $x/(y\backslash x)=y$ is $x/(x\backslash\negthinspace\backslash y)=y$, which becomes $x\,xy\mu^{\sigma\tau}\,\mu^{\tau\sigma\tau}=y$ or $\mu^{\sigma\tau}*\mu^{\tau\sigma\tau}=\epsilon$. Thus $\mu^{\sigma\tau}$ and $\mu^{\tau\sigma\tau}$ are mutual inverses.
\end{proof}

\begin{corollary}\label{C:hyperqgp}
The quasigroup identities $(\mathrm{SL}), (\mathrm{IL}), (\mathrm{SR}), (\mathrm{IR}), (\mathrm{DL}), (\mathrm{DR})$ of \S\ref{SS:quasigps} all take the form
\begin{equation}\label{E:hypercan}
x\,xy\mu^{\tau g}\,\mu^{g}=y
\end{equation}
for an element $g$ of $S_3$.
\end{corollary}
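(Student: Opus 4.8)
The plan is to obtain the corollary as an immediate reorganization of Proposition~\ref{T:hyperqgp}, since the canonical form \eqref{E:hypercan} is nothing more than the inverse relation $\mu^{\tau g}*\mu^g=\epsilon$ read back through the definition \eqref{E:mltonbin} of the monoid multiplication. No new structural input is needed; the content of the corollary is the recognition that the six apparently distinct quasigroup identities are a single $S_3$-parametrized pattern.

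First I would recall that the proposition establishes, for each $g\in S_3$, that $\mu^g$ and $\mu^{\tau g}$ are mutual inverses in $(M,*,\epsilon)$, so in particular $\mu^{\tau g}*\mu^g=\epsilon$. Applying \eqref{E:mltonbin} with $\alpha=\mu^{\tau g}$ and $\beta=\mu^g$, together with the right-projection identity $xy\epsilon=y$, then yields $x\,xy\mu^{\tau g}\,\mu^g=xy(\mu^{\tau g}*\mu^g)=xy\epsilon=y$, which is exactly \eqref{E:hypercan}. Thus it remains only to match each of the six named identities to a value of $g$.

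The matching is read straight off the displayed equations in the proof of the proposition: one identifies the superscript of the outer (second-applied) operation as $g$ and verifies that the inner operation then carries superscript $\tau g$. For instance, (SL) occurs there as $\mu^\tau*\mu=\epsilon$, giving $g=1$; (IL) occurs as $\mu*\mu^\tau=\epsilon$, giving $g=\tau$; and the remaining four---(SR), (IR), (DL), (DR)---supply $g=\sigma$, $\tau\sigma$, $\tau\sigma\tau$, $\sigma\tau$ respectively, by the identical reading. Since these six values exhaust $S_3$, every identity is accounted for. The only care required is this bookkeeping of superscripts, tracking which factor plays the role of $\mu^{\tau g}$ and which plays the role of $\mu^g$; I expect no substantive difficulty beyond confirming that the six identities distribute over the six group elements without collision.
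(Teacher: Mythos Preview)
Your proposal is correct and matches the paper's approach: the corollary is stated without proof, as an immediate consequence of Proposition~\ref{T:hyperqgp}, and your argument simply unwinds the relation $\mu^{\tau g}*\mu^g=\epsilon$ via \eqref{E:mltonbin} exactly as the proof of the proposition already does implicitly for each individual identity. Your assignment of the six values of $g$ to the six identities is accurate.
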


\section{Reversible automata}\label{S:revrsaut}

\subsection{Heterogeneous algebras}

Gvaramiya and Plotkin \cite{Gv} \cite{GvPl} introduced certain heterogeneous algebras for the reduction of quasigroup homotopies to homomorphisms. A \emph{reversible automaton} (\emph{of quasigroup type}) is a triple $(S_1,S_2,S_3)$ of sets or state spaces $S_i$, equipped with respective operations
\begin{align*}
&\mu:S_1\times S_2\to S_3;(x_1,x_2)\mapsto x_1\cdot x_2 \, ,\\
&\rho:S_3\times S_2\to S_1;(x_3,x_2)\mapsto x_3/x_2 \, ,\\
&\lambda:S_1\times S_3\to S_2;(x_1,x_3)\mapsto x_1\backslash x_3 \,
\end{align*}
of \emph{multiplication}, \emph{right division} and \emph{left division} satisfying the identities:
\begin{equation}\label{E:revautid}
\begin{cases}
( \mbox{ILA} ) & x_1\backslash (x_1\cdot x_2) = x_2 ;\\
( \mbox{IRA} ) & x_1 = (x_1\cdot x_2)/x_2 ;\\
( \mbox{SLA} ) & x_1\cdot(x_1\backslash x_3) = x_3 ;\\
( \mbox{SRA} ) & x_3 = (x_3/x_2)\cdot x_2.
\end{cases}
\end{equation}
analogous to \eqref{E:QgpIdens}. If $(S_1,S_2,S_3)$ and $(S'_1,S'_2,S'_3)$ are reversible automata, a homomorphism (in the sense of heterogeneous algebras \cite{BL,GM,Hi,Lug}) is a triple $(f_1,f_2,f_3)$ of maps $f_i:S_i\to S'_i$ (for $1\le i\le 3$) such that
\begin{equation}\label{E:rerautho}
x_1^{f_1}\cdot x_2^{f_2}=(x_1\cdot x_2)^{f_3}\, ,\ x_3^{f_3} / x_2^{f_2}=(x_3/x_2)^{f_1}\, ,\ x_1^{f_1}\backslash x_3^{f_3}=(x_1\backslash x_3)^{f_2}
\end{equation}
for $x_i$ in $S_i$. With homomorphisms as morphisms, the class of reversible automata forms a category $\mathbf{RAt}$, a variety of heterogeneous algebras.

Each quasigroup $Q$ yields a reversible automaton $Q^{\mathsf{at}}$ or $(Q,Q,Q)$ with equal state spaces. From \eqref{E:rerautho}, it then follows that a quasigroup homotopy $f=(f_1,f_2,f_3):Q\to Q'$ will produce a corresponding homomorphism $f^{\mathsf{at}}:Q^{\mathsf{at}}\to Q'^{\mathsf{at}}$ of reversible automata. Thus a functor $\mathsf{at}:\mathbf{Qtp}\to\mathbf{RAt}$ is defined, corestricting to an equivalence $\mathsf{at}:\mathbf{Qtp}\to\mathbf{QAt}$ of the homotopy category $\mathbf{Qtp}$ with a category $\mathbf{QAt}$ of homomorphisms between reversible automata.

\subsection{Pure reversible automata}\label{S:regreaut}

For any set $S$ (empty or not), there are reversible automata
\begin{equation}\label{E:iregular}
(\varnothing,\varnothing,S)\, ,\ (S,\varnothing,\varnothing)\, ,\ (\varnothing,S,\varnothing)
\end{equation}
of quasigroup type in which the operations of multiplication, right division, and left division respectively embed the empty set in $S$, while the other two operations in each are the identity $1_{\varnothing}$ on the empty set. The remaining possibilities are described by the following.

\begin{proposition}\label{P:regreaut}
Let $(S_1,S_2,S_3)$ be a reversible automaton of quasigroup type, with at most one empty state space. Then the state spaces $S_1$, $S_2$ and $S_3$ are isomorphic, nonempty sets.
\end{proposition}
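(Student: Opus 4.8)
The plan is to establish the two assertions in turn: first that none of the three state spaces can be empty, and then that the three (now nonempty) state spaces are mutually isomorphic. The emptiness question is where the hypothesis does all the work, whereas the isomorphisms will fall out directly from the reversibility identities, much as the bijectivity of left and right multiplications does in an ordinary quasigroup.

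First I would settle nonemptiness by a short case analysis driven by the domains and codomains of the three operations. Each operation carries a product of two state spaces into the third: for instance $\rho\colon S_3\times S_2\to S_1$. Were the target $S_1$ empty while both source factors $S_2$ and $S_3$ were nonempty, then $\rho$ would be a map from the nonempty set $S_3\times S_2$ into $\varnothing$, which is impossible. But the hypothesis that at most one state space is empty says precisely that if $S_1=\varnothing$ then $S_2$ and $S_3$ are nonempty, producing exactly this contradiction. The same reasoning applied to $\lambda\colon S_1\times S_3\to S_2$ excludes $S_2=\varnothing$, and applied to $\mu\colon S_1\times S_2\to S_3$ excludes $S_3=\varnothing$. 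Hence $S_1$, $S_2$, and $S_3$ are all nonempty.

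With nonemptiness in hand, I would construct the isomorphisms by imitating the quasigroup fact that translations are bijections. Fixing any $a\in S_1$, the left multiplication $S_2\to S_3;\ x_2\mapsto a\cdot x_2$ admits the map $S_3\to S_2;\ x_3\mapsto a\backslash x_3$ as a two-sided inverse, since (ILA) supplies one composite identity and (SLA) the other; this gives a bijection $S_2\cong S_3$. Dually, fixing any $b\in S_2$, the right multiplication $S_1\to S_3;\ x_1\mapsto x_1\cdot b$ has $x_3\mapsto x_3/b$ as a two-sided inverse by (IRA) and (SRA), giving $S_1\cong S_3$. Composing these two bijections yields $S_1\cong S_2\cong S_3$.

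The only routine computations are the four inverse verifications, each a single substitution into one of the identities of \eqref{E:revautid}. I expect the genuinely delicate step to be the nonemptiness argument: one must keep careful track of which ordered pair of state spaces feeds each operation, and exploit the fact that a function into $\varnothing$ forces an empty domain. The precise role of the ``at most one empty'' hypothesis is to convert each hypothetical emptiness of a codomain into a guaranteed nonempty domain, and hence into a contradiction.
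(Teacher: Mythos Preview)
Your proposal is correct and follows essentially the same approach as the paper: use the fact that a function into $\varnothing$ forces an empty domain to extract nonemptiness from the hypothesis, and then exhibit the mutually inverse translation maps using the identities \eqref{E:revautid}. The only organizational difference is that the paper argues nonemptiness of just one state space ($S_2$, via $\lambda$) and then recovers nonemptiness of the other two from the bijection $S_1\cong S_3$ together with the hypothesis, whereas you treat all three cases symmetrically at the outset; both routes use the same ingredients.
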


\begin{proof}
If $S_2$ were empty, the existence of the left division (as a map from $S_1\times S_3$ to $S_2$) would imply the emptiness of at least one of $S_1$ and $S_3$. Thus $S_2$ is nonempty. Fix an element $s_2$ of $S_2$. Then by (IRA) and (SRA), the maps
$$
S_1\to S_3;x_1\mapsto x_1\cdot s_2
\qquad\mbox{
and
}\qquad
S_3\to S_1;x_3\mapsto x_3/s_2
$$
are mutually inverse, showing that $S_1$ and $S_3$ are isomorphic. In particular, they are both nonempty. For an element $s_1$ of $S_1$, the maps
$$
S_2\to S_3;x_2\mapsto s_1\cdot x_2
\qquad\mbox{
and
}\qquad
S_3\to S_2;x_3\mapsto s_1\backslash x_3
$$
are mutually inverse by (ILA) and (SLA). Thus the state spaces $S_3$ and $S_2$ are isomorphic.
\end{proof}

\begin{definition}\label{D:regreaut}
A reversible automaton of quasigroup type is said to be \emph{pure} if its three state spaces are isomorphic sets.
\end{definition}


Now consider a pure reversible automaton $(S_1,S_2,S_3)$. By the purity, there is a set $Q$ with isomorphisms
\begin{equation}\label{E:isoq2aut}
l_i:Q\to S_i
\end{equation}
for $1\le i\le 3$.

\begin{lemma}\label{L:ReAt2Qgp}
Define a respective multiplication, right, and left division on $Q$ by
\begin{align}
x\cdot y&=(x^{l_1}\cdot y^{l_2})l_3^{-1}\, ,\label{E:dotonq} \\
x/y&=(x^{l_3}/y^{l_2})l_1^{-1}\, ,\label{E:slotonq} \\
x\backslash y&=(x^{l_1}\backslash y^{l_3})l_2^{-1}\, .\label{E:bkslotoq}
\end{align}
Then $(Q,\cdot,/,\backslash)$ is a quasigroup.
\end{lemma}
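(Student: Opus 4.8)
The plan is to verify directly that the four defining quasigroup identities (SL), (SR), (IL), (IR) of \eqref{E:QgpIdens} hold for $(Q,\cdot,/,\backslash)$, by transporting each one across the isomorphisms \eqref{E:isoq2aut} into the corresponding reversible automaton identity of \eqref{E:revautid}. The definitions \eqref{E:dotonq}--\eqref{E:bkslotoq} are engineered so that the indices align: in a product $x\cdot y$ the left argument enters through $l_1$, the right argument through $l_2$, and the output is read back through $l_3^{-1}$, with the two divisions handled analogously. Thus each quasigroup identity should collapse onto exactly one automaton identity once the isomorphisms cancel.

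To illustrate the pattern, I would first check (IL). Writing $z=x\cdot y$, so that $z^{l_3}=x^{l_1}\cdot y^{l_2}$ by \eqref{E:dotonq}, one computes
\begin{equation*}
x\backslash z=\bigl(x^{l_1}\backslash z^{l_3}\bigr)l_2^{-1}=\bigl(x^{l_1}\backslash(x^{l_1}\cdot y^{l_2})\bigr)l_2^{-1}=\bigl(y^{l_2}\bigr)l_2^{-1}=y,
\end{equation*}
where the third equality is precisely (ILA) applied in $(S_1,S_2,S_3)$. The remaining three identities run the same way, each matching a single automaton identity: (IR) reduces to (IRA), (SL) to (SLA), and (SR) to (SRA). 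For instance, for (SR) one sets $v=y/x$, so that $v^{l_1}=y^{l_3}/x^{l_2}$ by \eqref{E:slotonq}, and then $v\cdot x=\bigl((y^{l_3}/x^{l_2})\cdot x^{l_2}\bigr)l_3^{-1}=y^{l_3}l_3^{-1}=y$ by (SRA).

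Since each of the four identities matches a single automaton identity under this translation, $(Q,\cdot,/,\backslash)$ satisfies all of \eqref{E:QgpIdens} and is therefore an equational quasigroup. The only point requiring care --- rather than a genuine obstacle --- is the index bookkeeping: one must confirm at each step that the intermediate arguments actually lie in the state spaces demanded by the heterogeneous operations $\mu$, $\rho$, $\lambda$, so that every composite of the form $l_i l_i^{-1}$ cancels and the relevant member of (ILA), (IRA), (SLA), (SRA) is available. Once the typing of the three operations is respected, the verification is mechanical.
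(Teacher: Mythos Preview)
Your proof is correct and follows exactly the approach indicated in the paper: the paper's own proof simply states that the identities \eqref{E:revautid} on $(S_1,S_2,S_3)$ yield the identities \eqref{E:QgpIdens} on $Q$, and your argument spells out this transport explicitly, matching each of (IL), (IR), (SL), (SR) with its automaton counterpart (ILA), (IRA), (SLA), (SRA).
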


\begin{proof}
The identities \eqref{E:revautid} on $(S_1,S_2,S_3)$ yield the identities \eqref{E:QgpIdens} on $Q$, making $Q$ a quasigroup. 
\end{proof}

The definition of the operations on $Q$ shows that $$(l_1,l_2,l_3):Q^{\mathsf{at}}\to(S_1,S_2,S_3)$$ is an isomorphism in $\mathbf{RAt}$. If $(S_1,S_2,S_3)$ is isomorphic to $Q'^{\mathsf{at}}$ for a quasigroup $Q'$, then the isomorphism between $Q^{\mathsf{at}}$ and $Q'^{\mathsf{at}}$ shows that $Q$ and $Q'$ are isotopic. Summarizing, one has the following result, originally stated without the explicit purity hypothesis as \cite[Th. 1(1)]{Gv} \cite[Th. 1]{GvPl}\footnote{These papers used an implicit assumption of nonemptiness.}.

\begin{theorem}\label{T:rvat2qgp}
Let $R$ be a pure reversible automaton of quasigroup type.
\begin{itemize}
\item[$(\mathrm{a})$]
Within the category $\mathbf{RAt}$, the automaton $R$ is isomorphic to an automaton of the form $Q^{\mathsf{at}}$ for a quasigroup $Q$.
\item[$(\mathrm{b})$]
The quasigroup $Q$ of $(\mathrm{a})$ is unique up to isotopy.
\end{itemize}
\end{theorem}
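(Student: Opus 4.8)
The plan is to assemble part~(a) directly from the purity hypothesis together with Lemma~\ref{L:ReAt2Qgp}, and then to obtain part~(b) by unwinding what an isomorphism in $\mathbf{RAt}$ between two automata of the form $Q^{\mathsf{at}}$ actually says. Writing $R=(S_1,S_2,S_3)$, I would first invoke Definition~\ref{D:regreaut}: since $R$ is pure, there is a set $Q$ with isomorphisms $l_i\colon Q\to S_i$ as in \eqref{E:isoq2aut}. Transporting the automaton operations back along the $l_i$ via the formulas \eqref{E:dotonq}--\eqref{E:bkslotoq}, Lemma~\ref{L:ReAt2Qgp} makes $(Q,\cdot,/,\backslash)$ a quasigroup. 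By the very definition of these operations, the triple $(l_1,l_2,l_3)$ intertwines the operations of $Q^{\mathsf{at}}=(Q,Q,Q)$ with those of $R$, so it is a homomorphism in $\mathbf{RAt}$; as each $l_i$ is bijective, it is an isomorphism $Q^{\mathsf{at}}\to R$, which is exactly the assertion of~(a).

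For part~(b), suppose $R$ is also isomorphic to $Q'^{\mathsf{at}}$ for a quasigroup $Q'$. Composing the isomorphism $Q^{\mathsf{at}}\to R$ just produced with the inverse of an isomorphism $Q'^{\mathsf{at}}\to R$ gives an isomorphism $(g_1,g_2,g_3)\colon Q^{\mathsf{at}}\to Q'^{\mathsf{at}}$ in $\mathbf{RAt}$, with each $g_i\colon Q\to Q'$ a bijection. Specializing the homomorphism conditions \eqref{E:rerautho} to the equal state spaces of $Q^{\mathsf{at}}$ and $Q'^{\mathsf{at}}$, the first of them reads $x^{g_1}\cdot y^{g_2}=(x\cdot y)^{g_3}$ for all $x,y$ in $Q$, which is precisely the condition that $(g_1,g_2,g_3)$ be a quasigroup homotopy $Q\to Q'$. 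Since its components are bijections, this homotopy is an isotopy, so $Q$ and $Q'$ are isotopic.

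Almost all of the real content has already been discharged in Lemma~\ref{L:ReAt2Qgp} and is packaged into the purity hypothesis, so the one point genuinely worth checking --- and the place where I would be most careful --- is that an $\mathbf{RAt}$-isomorphism between $Q^{\mathsf{at}}$ and $Q'^{\mathsf{at}}$ really amounts to nothing more than an isotopy. Here I would verify that the multiplicative condition in \eqref{E:rerautho} already forces the two division conditions: if $x_3=x_1\cdot x_2$ in $Q$, then $x_1=x_3/x_2$ and $x_2=x_1\backslash x_3$, and applying $x^{g_1}\cdot y^{g_2}=(x\cdot y)^{g_3}$ shows that $x_1^{g_1}=x_3^{g_3}/x_2^{g_2}$ and $x_2^{g_2}=x_1^{g_1}\backslash x_3^{g_3}$, namely the remaining two equations of \eqref{E:rerautho}. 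Thus the three conditions collapse to the single homotopy identity, and the identification of the isomorphism with an isotopy is clean.
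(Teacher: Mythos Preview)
Your argument is correct and follows the paper's own approach exactly: the paper also uses the $l_i$ from purity and Lemma~\ref{L:ReAt2Qgp} to produce the isomorphism $(l_1,l_2,l_3)\colon Q^{\mathsf{at}}\to R$, and then observes that a composite $\mathbf{RAt}$-isomorphism $Q^{\mathsf{at}}\to Q'^{\mathsf{at}}$ is an isotopy. Your added check that the single multiplicative condition in \eqref{E:rerautho} already forces the two division conditions is a helpful elaboration of a point the paper leaves implicit.
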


\section{Semisymmetrized algebras and their linearization}\label{S:SeSymAlg}

\subsection{Diagonal algebras}

Suppose that an algebra $(P,\alpha)$ with a ternary operation $\alpha$ satisfies the \emph{diagonal identity}
\begin{align}\label{E:assocmu}
\big((x_{11},x_{12},x_{13})^{\alpha},
(x_{21},x_{22},x_{23})^{\alpha},
(x_{31},&x_{32},x_{33})^{\alpha}\big){\alpha}\\
&=(x_{11},x_{22},x_{33}){\alpha}\notag
\end{align}
for $x_{ij}$ in $P$ with $1\le i,j\le 3$, and the idempotence
\begin{equation}\label{E:unitmu}
(x,x,x)\alpha=x
\end{equation}
for $x$ in $P$. Thus $(P,\alpha)$ forms a \emph{diagonal algebra} in the sense of P\l onka \cite{Pl1} \cite{Pl2} \cite[Ex. 5.2.2]{RS}.

\subsection{Semisymmetrized algebras}

\begin{definition}\label{D:semisymd}\cite[Def'n.~34]{SmQHSRA}
An algebra $(P,\cdot,\alpha)$ equipped with a binary multiplication denoted by $\cdot$ or juxtaposition, and an idempotent ternary operation $\alpha$, is a \emph{semisymmetrized algebra} if:
\begin{enumerate}
\item[$(\mathrm{a})$] $(P,\cdot)$ is a semisymmetric quasigroup \, ;
\item[$(\mathrm{b})$] $(P,\alpha)$ is a diagonal algebra;
\item[$(\mathrm{c})$] For all $x_i$ and $y_j$ in $P$, the \emph{compatibility identity}
\begin{equation}\label{E:cmptblty}
(x_1y_1,x_2y_2,x_3y_3)\alpha =
(x_3,x_1,x_2)\alpha \cdot (y_2,y_3,y_1)\alpha
\end{equation}
is satisfied.
\end{enumerate}
\end{definition}

By Definition~\ref{D:semisymd}(a), the reduct $(P,\cdot)$ within a semisymmetrized algebra $(P,\cdot,\alpha)$ is a semisymmetric quasigroup. By Definition~\ref{D:semisymd}(b), the reduct $(P,\alpha)$ is a diagonal algebra.

\begin{theorem}\label{T:semisymd}\cite[Th.~35]{SmQHSRA}
The category $\mathbf{Qtp}$ of quasigroup homotopies is equivalent to the variety of semisymmetrized algebras.
\end{theorem}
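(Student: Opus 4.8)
The plan is to upgrade the semisymmetrization functor $\Delta$ of \eqref{E:semisym}--\eqref{E:deltaonf} to a functor $\Delta'$ landing in the variety of semisymmetrized algebras, and then to prove directly that $\Delta'$ is an equivalence by checking that it is full, faithful, and essentially surjective. First I would define the object part of $\Delta'$ by equipping the cube $Q\Delta=(Q^3,\cdot)$ with the \emph{diagonal} ternary operation
$$
\big((x_{11},x_{12},x_{13}),(x_{21},x_{22},x_{23}),(x_{31},x_{32},x_{33})\big)\alpha=(x_{11},x_{22},x_{33})
$$
for $x_{ij}$ in $Q$. Then part (a) of Definition~\ref{D:semisymd} holds because $Q\Delta$ is semisymmetric, part (b) is the immediate observation that this operation satisfies the diagonal identity \eqref{E:assocmu} and the idempotence \eqref{E:unitmu}, and part (c) --- the compatibility identity \eqref{E:cmptblty} --- reduces to substituting \eqref{E:semisym} into both sides and matching coordinatewise. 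On morphisms I would let $\Delta'$ act by \eqref{E:deltaonf}; that map already preserves $\cdot$, as computed in the excerpt, and preserves the diagonal $\alpha$ trivially, so it is a homomorphism of semisymmetrized algebras.

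For fullness and faithfulness, the key point is that the diagonal operation records the cube structure. A short argument (the $n=3$ analogue of the $n=2$ case, where $\alpha((a,b,c),(d,e,f),(g,h,k))=(a,e,k)$ forces each output coordinate to depend on a single input) shows, in line with the theory of P\l onka diagonal algebras \cite{Pl1,Pl2}, \cite[Ex. 5.2.2]{RS}, that every homomorphism of the underlying diagonal algebras $Q^3\to Q'^3$ is coordinatewise, of the form $(f_1,f_2,f_3)$ for maps $f_i\colon Q\to Q'$. Imposing in addition that such a map respect $\cdot$, and reading off the third coordinate of \eqref{E:semisym}, yields exactly the homotopy condition $xf_1\cdot yf_2=(x\cdot y)f_3$, while the first two coordinates give the equivalent conditions for the parastrophes $/\negthinspace/$ and $\backslash\negthinspace\backslash$. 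Hence morphisms $Q\Delta'\to Q'\Delta'$ are in natural bijection with homotopies $Q\to Q'$, so $\Delta'$ is fully faithful.

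The main work, and the step I expect to be the real obstacle, is essential surjectivity: every semisymmetrized algebra $(P,\cdot,\alpha)$ must be isomorphic to some $Q\Delta'$. Applying the P\l onka structure theorem to the diagonal algebra $(P,\alpha)$, I would coordinatize $P\cong Q^3$ so that $\alpha$ becomes the diagonal. Transporting $\cdot$ to $Q^3$ and substituting into \eqref{E:cmptblty}, a bookkeeping argument with the permuted arguments forces the product into the separated form
$$
(u^1,u^2,u^3)\cdot(v^1,v^2,v^3)=\big(g_1(u^2,v^3),\,g_2(u^3,v^1),\,g_3(u^1,v^2)\big)
$$
for three binary operations $g_1,g_2,g_3$ on $Q$: each output coordinate is independent of all but one coordinate of each factor, since the remaining coordinates occur freely on only one side of the compatibility identity.

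The delicate part is then to invoke the semisymmetry of $(P,\cdot)$ from Definition~\ref{D:semisymd}(a). Expanding $a\cdot(b\cdot a)=b$ coordinatewise produces identities such as $g_1\big(a^2,g_3(b^1,a^2)\big)=b^1$, which, together with the Latin-square property that each $g_i$ inherits from the quasigroup $(P,\cdot)$, identify $g_3$ as a quasigroup multiplication $\cdot_Q$ on $Q$ and $g_1,g_2$ as its parastrophes $/\negthinspace/$ and $\backslash\negthinspace\backslash$. Matching against \eqref{E:semisym} then exhibits the required isomorphism $(P,\cdot,\alpha)\cong Q\Delta'$. Combining essential surjectivity with full faithfulness establishes that $\Delta'$ is an equivalence of $\mathbf{Qtp}$ with the variety of semisymmetrized algebras, as claimed.
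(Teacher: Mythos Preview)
The paper does not give its own proof of this theorem: it is simply quoted from \cite[Th.~35]{SmQHSRA}. The only hint about the original argument appears in \S\ref{SS:SeSyAlSe}, where the paper remarks that the companion result \cite[Th.~33]{SmQHSRA} (that semisymmetric reducts of semisymmetrized algebras are semisymmetrizations) was obtained ``by use of abstract categorical methods''. So your concrete strategy --- upgrade $\Delta$ to land in semisymmetrized algebras and verify full, faithful, essentially surjective by hand --- is a genuinely different route from the cited source, trading categorical machinery (the adjunction $\Sigma\dashv\Delta$ and its attendant structure) for a direct coordinate computation via P\l onka's structure theorem. The gain is transparency; the cost is that you must manage the bookkeeping yourself rather than letting the adjunction do it.

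Your outline is sound, but there is one slip in the essential-surjectivity step. P\l onka's theorem for ternary diagonal algebras gives only $P\cong P_1\times P_2\times P_3$ with \emph{a priori} distinct factors, not $P\cong Q^3$; the diagonal structure alone cannot force the three factors to coincide. In your write-up you jump straight to a single $Q$, and then speak of ``three binary operations $g_1,g_2,g_3$ on $Q$''. What the compatibility identity actually produces is a triple of heterogeneous operations
\[
g_3\colon P_1\times P_2\to P_3\,,\qquad
g_1\colon P_2\times P_3\to P_1\,,\qquad
g_2\colon P_3\times P_1\to P_2\,,
\]
and it is only the semisymmetry of $(P,\cdot)$ that makes each $g_i$ bijective in each argument --- i.e.\ $(P_1,P_2,P_3)$ is a pure reversible automaton in the sense of \S\ref{S:regreaut}. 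At that point Proposition~\ref{P:regreaut} (or its empty degenerate case) supplies the set isomorphisms $P_1\cong P_2\cong P_3$, after which your identification with a single $Q$ and with the parastrophes $\cdot,/\negthinspace/,\backslash\negthinspace\backslash$ goes through exactly as you describe. So the fix is just to reorder: apply P\l onka to get three factors, derive the reversible-automaton structure from compatibility plus semisymmetry, invoke purity to equalize the factors, and only then match against \eqref{E:semisym}. This is, incidentally, precisely the shape of the linear special case worked out in \S\ref{SS:SeSyAlSe}.
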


\subsection{Linearization of semisymmetrized algebras}

Linearization is a useful general technique for the analysis of algebraic structures (compare \cite[Ch.~7]{RS}, for example). Here, we will linearize semisymmetrized algebras.

\subsubsection{Semisymmetric quasigroup structure}

Consider an abelian group $P$ with an automorphism $\rho$ and multiplication
\begin{equation}\label{E:rhomultP}
x\cdot y=x\rho+y\rho^{-1}
\end{equation}
such that $(P,\cdot)$ is a semisymmetric quasigroup. Note that
$$
\rho=R(0)=L(0)^{-1}
$$
in accordance with \eqref{E:R(x)L(x)}. Then
\begin{align*}
y
&
=
(x.y)\cdot x
=
(x\rho+y\rho^{-1})\rho+x\rho^{-1}
=x(\rho^2+\rho^{-1})+y \, .
\end{align*}
Equating coefficients of $x$ gives $\rho^3=-1$. Conversely, the multiplication 
\eqref{E:rhomultP} gives a semisymmetric quasigroup structure $(P,\cdot)$ if $\rho^3=-1$.

\subsubsection{Diagonal algebra structure}

Now consider the ternary operation
$$
(x_1,x_2,x_3)\alpha=x_1a_1+x_2a_2+x_3a_3
$$
with endomorphisms $a_1,a_2,a_3$ of $P$. Idempotence is equivalent to
$$
a_1+a_2+a_3=1 \, .
$$
For the diagonal identity, consider
\begin{align*}
x_{11}a_1+&x_{22}a_2+x_{33}a_3
=
(x_{11},x_{22},x_{33})\alpha
\\
&
=
\big(
(x_{11},x_{12},x_{13})\alpha,
(x_{21},x_{22},x_{23})\alpha,
(x_{31},x_{32},x_{33})\alpha
\big)
\alpha
\\
&
=
x_{11}a_1^2+x_{12}a_2a_1+x_{13}a_3a_1
\\
&
\rule{0mm}{0mm}
+x_{21}a_2a_1+x_{22}a_2^2+x_{23}a_3a_2
\\
&
\rule{0mm}{0mm}
+x_{31}a_3a_1+x_{32}a_2a_3+x_{33}a_3^3 \,.
\end{align*}
Equating coefficients shows that the diagonal algebra structure is equivalent to the fact that $\{a_1,a_2,a_3\}$ forms a complete set of orthogonal idempotents.

\subsubsection{Compatibility}

In the linear setting, compatibility amounts to
\begin{align*}
(x_1\rho&+y_1\rho^{-1})a_1
+(x_2\rho+y_2\rho^{-1})a_2
+(x_3\rho+y_3\rho^{-1})a_3
\\
&
=
(x_1y_1,x_2y_2,x_3y_3)\alpha
\\
&
=
(x_3,x_1,x_2)\alpha \cdot (y_2,y_3,y_1)\alpha
\\
&
=
(x_3a_1+x_1a_2+x_2a_3)\rho
+
(y_2a_1+y_3a_2+y_1a_3)\rho^{-1} \, .
\end{align*}
Equating coefficients of $x_1,x_2,x_3,y_1,y_2,y_3$ yields
\begin{align*}
\rho a_1=a_2\rho,\quad
\rho a_2=a_3\rho,\quad
\rho a_3=a_1\rho
\end{align*}
and
\begin{align*}
\rho^{-1}a_1=a_3\rho^{-1},\quad
\rho^{-1}a_2=a_1\rho^{-1},\quad
\rho^{-1}a_3=a_2\rho^{-1} \, ,
\end{align*}
each of which is equivalent to the conjugation relations
\begin{align*}
a_1=a_2^\rho,\quad
a_2=a_3^\rho,\quad
a_3=a_1^\rho
\end{align*}
in the endomorphism ring of the abelian group $P$.

\subsubsection{Summary}\label{SSS:LinSemiS}

The results of this section may be collected as follows.

\begin{theorem}\label{T:LinSemiS}
Let $P$ be an abelian group. Consider operations
\begin{equation}\label{E:dotononP}
x\cdot y=x\rho+y\lambda
\end{equation}
and
\begin{equation}\label{E:alphaonP}
(x_1,x_2,x_3)\alpha=x_1a_1+x_2a_2+x_3a_3
\end{equation}
on the set $P$, with endomorphisms $\rho,\lambda,a_1,a_2,a_3$ of the abelian group $P$. Then $(P,\cdot,\alpha)$ is a semisymmetrized algebra if and only if $\{a_1,a_2,a_3\}$ forms a complete set of orthogonal idempotents,
$
\rho\lambda=\lambda\rho=1_P=-\rho^3
$,
and the conjugation relations
$
a_1=a_2^\rho
$,
$
a_2=a_3^\rho
$,
$
a_3=a_1^\rho
$
hold in the endomorphism ring of the abelian group $P$.
\end{theorem}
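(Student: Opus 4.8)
The plan is to read Theorem~\ref{T:LinSemiS} as the assembly of the three coefficient computations already carried out, organized around the three clauses of Definition~\ref{D:semisymd}. Throughout, the key technical device is the legitimacy of \emph{equating coefficients}: an identity of the form $xf + yg = xh + yk$ holding for all $x, y \in P$, with $f, g, h, k$ endomorphisms of $P$, forces $f = h$ and $g = k$, since setting $y = 0$ yields $xf = xh$ for all $x$ (hence $f = h$ as endomorphisms) and setting $x = 0$ yields $g = k$. The analogous several-variable principle will be used for the ternary operation. Because each equivalence below is obtained by such a coefficient comparison, every step is biconditional, which is exactly what delivers the ``if and only if'' in the statement.

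For clause (a), I would substitute \eqref{E:dotononP} into the semisymmetry identity in the form $xy \cdot x = y$ to obtain $x\rho^2 + y\lambda\rho + x\lambda = y$ for all $x, y$. Equating coefficients gives $\lambda\rho = 1_P$ and $\lambda = -\rho^2$; combining these yields $\rho\lambda = \lambda\rho = 1_P = -\rho^3$, in agreement with \eqref{E:R(x)L(x)}. Conversely, these relations make $\rho$ and $\lambda$ mutually inverse automorphisms, so $(P,\cdot)$ is a quasigroup, and $-\rho^3 = 1_P$ returns semisymmetry. Thus clause (a) is equivalent to the quoted relations on $\rho$ and $\lambda$, and in particular yields the invertibility of $\rho$ needed to make sense of the conjugates $a_i^\rho$.

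For clause (b), substitute \eqref{E:alphaonP} into idempotence \eqref{E:unitmu} to get $a_1 + a_2 + a_3 = 1_P$, and into the diagonal identity \eqref{E:assocmu}; expanding the nested application and comparing the coefficients of the nine variables $x_{ij}$ shows that $\{a_1, a_2, a_3\}$ is a complete set of orthogonal idempotents. For clause (c), substitute both \eqref{E:dotononP} and \eqref{E:alphaonP} into the compatibility identity \eqref{E:cmptblty}, using the relation $\lambda = \rho^{-1}$ already extracted from clause (a); here lies the only real bookkeeping, since the permuted argument triples $(x_3, x_1, x_2)$ and $(y_2, y_3, y_1)$ must be tracked while the factors $\rho$ and $\rho^{-1}$ are distributed. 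Equating the coefficients of $x_1, x_2, x_3, y_1, y_2, y_3$ produces the intertwining relations $\rho a_1 = a_2 \rho$, $\rho a_2 = a_3\rho$, $\rho a_3 = a_1\rho$ (the $\rho^{-1}$-coefficients giving the same relations), which are precisely the conjugation relations $a_1 = a_2^\rho$, $a_2 = a_3^\rho$, $a_3 = a_1^\rho$.

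The main obstacle is organizational rather than conceptual: one must confirm that the three clauses decouple cleanly, so that the relations forced by semisymmetry (on $\rho, \lambda$), by the diagonal axioms (on the $a_i$), and by compatibility (coupling $\rho$ to the $a_i$) can be imposed simultaneously without hidden interaction. The one genuine dependency is that clause (c) presupposes $\lambda = \rho^{-1}$ from clause (a), which is harmless since that relation is part of the condition set in both directions. A final consistency check worth recording is that the conjugation relations of clause (c) are automatically compatible with the orthogonal-idempotent structure of clause (b), because conjugation by the automorphism $\rho$ carries a complete set of orthogonal idempotents to another such set. Collecting the three equivalences then yields the theorem.
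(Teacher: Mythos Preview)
Your proposal is correct and follows essentially the same route as the paper: the proof in \S\S5.3.1--5.3.3 is exactly this clause-by-clause coefficient comparison, with the theorem appearing in \S\ref{SSS:LinSemiS} as the summary. If anything, you are slightly more careful in clause~(a), deriving $\lambda=\rho^{-1}$ from the semisymmetry identity rather than assuming it at the outset as the paper's exposition does.
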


\subsection{Semisymmetrized algebras and semisymmetrizations}\label{SS:SeSyAlSe}

In \cite[Th.~33]{SmQHSRA}, it was shown in the general case, by use of abstract categorical methods, that the semisymmetric quasigroup reducts of semisymmetrized algebras are (isomorphic to) semisymmetrizations. This result will now be established concretely in the linear case described by Theorem~\ref{T:LinSemiS}.

Consider an abelian group $P$ with endomorphisms $\rho,\lambda,a_1,a_2,a_3$ satisfying the conditions of Theorem~\ref{T:LinSemiS}. It will be shown that   
$(P,\cdot)$ has the structure of the semisymmetrization of an abelian group isotope, as described in \cite[\S2]{ImKoSm}.

Since the endomorphisms $a_1,a_2,a_3$ form a complete set of orthogonal idempotents (or in linear algebra terminology: projections), the abelian group $P$ admits a biproduct decomposition
$
P=S_1\oplus S_2\oplus S_3
$
with $S_i=\mathrm{Im}\,a_i$ for $1\le i\le 3$. We will write elements of $P$ as triples $[x_1\ x_2\ x_3]$ with $x_i\in S_i$ for $1\le i\le 3$.

\begin{lemma}\label{L:SeSyAlSe}
There are abelian group homomorphisms
\begin{equation}\label{E:theta123}
\theta_1\colon S_1\to S_3\,,\quad
\theta_2\colon S_2\to S_1\,,\quad
\theta_3\colon S_3\to S_2
\end{equation}
such that
\begin{align*}
[x_1\ 0\ 0]\rho=[0\ 0\ x_1\theta_1]\,,\
[0\ x_2\ 0]\rho=[x_2\theta_2\ 0\ 0]\,,\
[0\ 0\ x_3]\rho=[0\ x_3\theta_3\ 0]
\end{align*}
with $x_i\in P_i$ for $1\le i\le 3$.
\end{lemma}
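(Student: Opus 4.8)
The plan is to define each $\theta_i$ simply as the restriction of the group endomorphism $\rho$ to the summand $S_i$, and then to verify, using the conjugation relations together with the orthogonality of the idempotents, that $\rho$ carries each $S_i$ into the prescribed target summand. Since $\rho$ is an endomorphism of the abelian group $P$, any such restriction is automatically a group homomorphism, so the only real content is locating the image correctly.

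First I would record the two facts about the biproduct decomposition $P = S_1 \oplus S_2 \oplus S_3$ that will be used repeatedly: because $\{a_1, a_2, a_3\}$ is a complete set of orthogonal idempotents, an element $z$ lies in $S_i = \mathrm{Im}\,a_i$ if and only if $z a_i = z$ and $z a_j = 0$ for $j \neq i$, and the $S_j$-component of an arbitrary element $z$ is exactly $z a_j$. Thus to show that $[x_1\ 0\ 0]\rho$ lies in $S_3$ and equals $[0\ 0\ x_1\rho]$, it suffices to compute the three projections $(x_1\rho)a_1$, $(x_1\rho)a_2$, $(x_1\rho)a_3$ and check that the first two vanish while the third returns $x_1\rho$.

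The computation is a direct application of the conjugation relations from the compatibility step, which in operator form read $\rho a_1 = a_2\rho$, $\rho a_2 = a_3\rho$, $\rho a_3 = a_1\rho$. For $x_1 \in S_1$ (so $x_1 a_1 = x_1$ and $x_1 a_2 = x_1 a_3 = 0$) one finds $(x_1\rho)a_1 = x_1(a_2\rho) = (x_1 a_2)\rho = 0$, then $(x_1\rho)a_2 = x_1(a_3\rho) = (x_1 a_3)\rho = 0$, and finally $(x_1\rho)a_3 = x_1(a_1\rho) = (x_1 a_1)\rho = x_1\rho$, so $x_1\rho \in S_3$. Setting $x_1\theta_1 = x_1\rho$ then gives the first displayed identity. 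The cases of $S_2$ and $S_3$ are entirely analogous, yielding $\theta_2\colon S_2 \to S_1$ and $\theta_3\colon S_3 \to S_2$; in each case the three relations conspire so that exactly one projection survives and it returns $x_i\rho$.

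I do not expect a genuine obstacle here: the statement is in essence the observation that the cyclic conjugation relations say precisely that $\rho$ permutes the three summands in the cycle $S_1 \to S_3 \to S_2 \to S_1$. The only point requiring care is bookkeeping, namely applying each relation $\rho a_j = a_{j+1}\rho$ in the correct direction and tracking the cyclic shift of indices, so that the surviving projection lands in the summand named in the statement rather than in one of its cyclic neighbors.
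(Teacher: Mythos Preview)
Your proposal is correct and follows essentially the same approach as the paper. The paper's proof is slightly more streamlined: rather than computing all three projections $(x_1\rho)a_j$, it uses the single relation $a_3=a_1^\rho$ (equivalently $a_1\rho=\rho a_3$) to write $[x_1\ 0\ 0]\rho=[x_1\ 0\ 0]a_1\rho=[x_1\ 0\ 0]\rho a_3\in\mathrm{Im}\,a_3=S_3$ directly, but this is the same idea expressed with one step instead of three.
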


\begin{proof}
Since $a_1$ is a projection, we have
\begin{align*}
[x_1\ 0\ 0]\rho
=[x_1\ 0\ 0]a_1\rho
=[x_1\ 0\ 0]\rho a_3
\end{align*}
by the conjugation relation $a_3=a_1^\rho$ of Theorem~\ref{T:LinSemiS}. Now $[x_1\ 0\ 0]\rho a_3$ has the form $[0\ 0\ x_1\theta_1]$ for some homomorphism $\theta_1\colon S_1\to S_3$, as required. The other relations are obtained in similar fashion.
\end{proof}

By Lemma~\ref{L:SeSyAlSe}, it follows that
$$
[x_1\ x_2\ x_3]\rho
=
[x_1\ x_2\ x_3]
\begin{bmatrix}
0 &0 &\theta_1\\
\theta_2 &0 &0\\
0 &\theta_3 &0
\end{bmatrix}
$$
for $[x_1\ x_2\ x_3]$ in $P$. The relation $\rho^3=-1$ of Theorem~\ref{T:LinSemiS} then implies that
\begin{equation}\label{E:theta123}
\theta_1\theta_3\theta_2=-1_{S_1}\,,\quad
\theta_2\theta_1\theta_3=-1_{S_2}\,,\quad
\theta_3\theta_2\theta_1=-1_{S_3}\,,
\end{equation}
whence the homomorphisms \eqref{E:theta123} of Lemma~\ref{L:SeSyAlSe} are isomorphisms. Thus
\begin{equation}\label{E:AiS1S2S3}
A\cong S_1\cong S_2\cong S_3
\end{equation}
for some abelian group $A$.

Now for elements $\mathbf x=[x_1\ x_2\ x_3]$ and $\mathbf y=[y_1\ y_2\ y_3]$ of $P$, we have
\begin{align*}
\mathbf x\cdot\mathbf y
&
=
[x_1\ x_2\ x_3]
\begin{bmatrix}
0 &0 &\theta_1\\
\theta_2 &0 &0\\
0 &\theta_3 &0
\end{bmatrix}
+
[y_1\ y_2\ y_3]
\begin{bmatrix}
0 &0 &\theta_1\\
\theta_2 &0 &0\\
0 &\theta_3 &0
\end{bmatrix}^{-1}
\\
&
=
[x_1\ x_2\ x_3]
\begin{bmatrix}
0 &0 &\theta_1\\
\theta_2 &0 &0\\
0 &\theta_3 &0
\end{bmatrix}
+
[y_1\ y_2\ y_3]
\begin{bmatrix}
0 &\theta_2^{-1} &0\\
0 &0 &\theta_3^{-1}\\
\theta_1^{-1} &0 &0
\end{bmatrix}
\\
&
=
\begin{bmatrix}
x_2\theta_2+y_3\theta_1^{-1}
&
x_3\theta_3+y_1\theta_2^{-1}
&
x_1\theta_1+y_2\theta_3^{-1}
\end{bmatrix}
\end{align*}
Comparison with \eqref{E:semisym} yields:
\begin{align}\label{E:ratlinSS}
\begin{cases}
y_3/x_2&=y_3\theta_1^{-1}+x_2\theta_2\in S_1\, ,
\\
y_1\backslash x_3&=y_1\theta_2^{-1}+x_3\theta_3\in S_2\, ,
\\
x_1\cdot y_2&=x_1\theta_1+y_2\theta_3^{-1}\in S_3
\end{cases}
\end{align}
for $x_i,y_i\in S_i$.

\begin{theorem}\label{T:LiSSARvA}
Consider an abelian group $P$ equipped with endomorphisms $\rho,\lambda,a_1,a_2,a_3$ satisfying the conditions of Theorem~\ref{T:LinSemiS}. Define $S_i=\mathrm{Im}\,a_i$ for $1\le i\le 3$, with the abelian group isomorphisms $\theta_i$ of \eqref{E:theta123}.
\begin{enumerate}
\item[$(\mathrm a)$] The operations \eqref{E:ratlinSS} define a reversible automaton on the triple $(S_1,S_2,S_3)$ of state spaces.
\item[$(\mathrm b)$] The semisymmetrized algebra $(P,\cdot,\alpha)$ with \eqref{E:dotononP}, \eqref{E:alphaonP} is isomorphic to the semisymmetrization of (a quasigroup isotopic to) the abelian group $A$ of \eqref{E:AiS1S2S3}.
\end{enumerate}
\end{theorem}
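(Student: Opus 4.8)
The plan is to prove the two parts separately, reading off everything from the matrix presentation of $\rho$ that has already been established. For part (a), I would verify directly that the operations \eqref{E:ratlinSS} satisfy the reversible-automaton identities (ILA), (IRA), (SLA), (SRA) of \eqref{E:revautid}. Each operation is an explicit abelian-group expression involving the isomorphisms $\theta_i$ and their inverses, so this is a routine substitution. For instance, to check (ILA), $x_1\backslash(x_1\cdot x_2)=x_2$, I would compute $x_1\cdot x_2=x_1\theta_1+x_2\theta_3^{-1}\in S_3$, then feed this into the left-division formula $y_1\backslash x_3=y_1\theta_2^{-1}+x_3\theta_3$, obtaining $x_1\theta_2^{-1}+(x_1\theta_1+x_2\theta_3^{-1})\theta_3=x_1\theta_2^{-1}+x_1\theta_1\theta_3+x_2$. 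The cross terms must cancel, which is exactly where the relation $\theta_2\theta_1\theta_3=-1_{S_2}$ from \eqref{E:theta123} is used: composing $x_1\theta_2^{-1}$ appropriately and invoking $\theta_1\theta_3=-\theta_2^{-1}$ forces $x_1\theta_2^{-1}+x_1\theta_1\theta_3=x_1\theta_2^{-1}-x_1\theta_2^{-1}=0$, leaving $x_2$. The remaining three identities follow by the same mechanism, each time consuming one of the three cyclic relations in \eqref{E:theta123}. This confirms that $(S_1,S_2,S_3)$ with \eqref{E:ratlinSS} is a reversible automaton.

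For part (b), the strategy is to combine part (a) with the machinery of Section~\ref{S:revrsaut} and the semisymmetrization functor. The reversible automaton of part (a) is pure, since by \eqref{E:AiS1S2S3} its three state spaces $S_1,S_2,S_3$ are all isomorphic to the abelian group $A$. By Theorem~\ref{T:rvat2qgp}, this pure automaton is isomorphic in $\mathbf{RAt}$ to $Q^{\mathsf{at}}$ for some quasigroup $Q$, unique up to isotopy; concretely, transporting the operations \eqref{E:ratlinSS} along the isomorphisms $S_i\cong A$ furnished by the $\theta_i$ exhibits $Q$ as an abelian-group isotope of $A$. It then remains to identify the semisymmetric quasigroup $(P,\cdot)$ with the semisymmetrization $Q\Delta$ of this isotope. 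I would do this by comparing the displayed multiplication formula for $\mathbf x\cdot\mathbf y$ on $P=S_1\oplus S_2\oplus S_3$ with the defining formula \eqref{E:semisym} for $Q\Delta$: the three coordinate expressions $y_3/x_2$, $y_1\backslash x_3$, $x_1\cdot y_2$ appearing in \eqref{E:ratlinSS} are precisely the right division, left division, and multiplication of the quasigroup $Q$, arranged exactly as the opposite operations $x_2/\!/y_3$, $x_3\backslash\!\backslash y_1$, $x_1\cdot y_2$ are arranged in \eqref{E:semisym}. Thus the identity map on the underlying set, read through the decomposition, is an isomorphism of semisymmetric quasigroups $(P,\cdot)\cong Q\Delta$.

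Finally, I would promote this to an isomorphism of \emph{semisymmetrized} algebras, not merely of their semisymmetric quasigroup reducts. Since $(P,\cdot,\alpha)$ is a semisymmetrized algebra by Theorem~\ref{T:LinSemiS}, and since by Theorem~\ref{T:semisymd} the category $\mathbf{Qtp}$ is equivalent to the variety of semisymmetrized algebras, the diagonal operation $\alpha$ with $(x_1,x_2,x_3)\alpha=x_1a_1+x_2a_2+x_3a_3$ is forced: on the biproduct $P=S_1\oplus S_2\oplus S_3$ the idempotents $a_i$ are simply the coordinate projections, so $\alpha$ recovers the diagonal algebra structure of $Q\Delta$ canonically. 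Verifying that the quasigroup isomorphism of the previous paragraph also respects $\alpha$ is then immediate from the fact that projections are preserved under the identification. The only genuinely delicate point, and the step I expect to be the main obstacle, is the bookkeeping in part (a): keeping the cyclic roles of $\theta_1,\theta_2,\theta_3$ straight and matching each automaton identity to the correct one of the three relations $\theta_i\theta_{i-1}\theta_{i+1}=-1_{S_i}$ in \eqref{E:theta123}, since a mismatch in the cyclic indexing or in the placement of inverses would make the cross terms fail to cancel.
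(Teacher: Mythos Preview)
Your proof of part (a) is essentially identical to the paper's: both verify (ILA) by direct substitution, cancelling the cross terms via $\theta_1\theta_3=-\theta_2^{-1}$ from \eqref{E:theta123}, and leave the remaining identities as analogous computations.

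For part (b), your route is correct but slightly more abstract than the paper's. You invoke Theorem~\ref{T:rvat2qgp} and the category equivalence of Theorem~\ref{T:semisymd} to conclude that the underlying quasigroup $Q$ is some isotope of $A$, then match $(P,\cdot)$ with $Q\Delta$ by comparing the computed product on $P$ against \eqref{E:semisym}. The paper instead applies Lemma~\ref{L:ReAt2Qgp} directly, making the explicit choices $l_1=1_A$, $l_2=\theta_2^{-1}$, $l_3=\theta_2^{-1}\theta_3^{-1}=-\theta_1^{-1}$, and then computes the transported multiplication on $A$ concretely as $x\cdot y=-x+y$, opposed subtraction. The paper's approach thus extracts a sharper conclusion (the specific isotope, not merely its existence up to isotopy), while yours stays at the generality of the theorem statement and, as a bonus, explicitly addresses why the diagonal operation $\alpha$ is respected---a point the paper's proof leaves implicit. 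Both arguments are sound.
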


\begin{proof}
(a) To verify (ILA) of \eqref{E:revautid}, consider $x_1\in S_1$ and $x_2\in S_2$. Then
\begin{align*}
x_1\backslash (x_1\cdot x_2)
&
=
x_1\theta_2^{-1}+\big(x_1\theta_1+x_2\theta_3^{-1}\big)\theta_3
\\
&
=
x_1\theta_2^{-1}+x_1\theta_1\theta_3+x_2=x_2\, ,
\end{align*}
since $x_1\theta_1\theta_3=-x_1\theta_2^{-1}$ by the first equation of \eqref{E:theta123}. Verification of the remaining parts of \eqref{E:revautid} is similar.
\vskip 3mm
\noindent
(b) We will apply Lemma~\ref{L:ReAt2Qgp} to the reversible automaton of (a). Choose the set isomorphisms \eqref{E:isoq2aut} according to the commutative diagram
$$
\xymatrix{
S_1
\ar[r]^{\theta_2^{-1}}
&
S_2
\ar[r]^{\theta_3^{-1}}
&
S_3
\\
&
A
\ar[ul]^{l_1=1_A}
\ar[u]^{l_2}
\ar[ur]_{l_3}
}
$$
so that $l_2=\theta_2^{-1}$ and $l_3=l_2\theta_3^{-1}=\theta_2^{-1}\theta_3^{-1}=\big(\theta_3\theta_2\big)^{-1}=-\theta_1^{-1}$, the last equation here holding by the last equation of \eqref{E:theta123}. Then from the last equation of \eqref{E:ratlinSS}, and \eqref{E:dotonq} in Lemma~\ref{L:ReAt2Qgp}, the semisymmetrized algebra $(P,\cdot,\alpha)$ is the semisymmetrization of the operation
\begin{align*}
x\cdot y
&
=
\big(x\theta_1+yl_2\theta_3^{-1}\big)l_3^{-1}
=
(x\theta_1+yl_3)l_3^{-1}
=
x\theta_1l_3^{-1}+y
=-x+y
\end{align*}
of opposed subtraction on elements $x,y$ of the abelian group $A$.
\end{proof}

\section{Words in free central quasigroups}

\subsection{Representing free central piques}\label{SS:CePiFrGp}

Recall that a \emph{pique} is a quasigroup with a pointed idempotent element. An algebra $Q$ is \emph{central} if the diagonal $\widehat{Q}=\{(x,x)\mid x\in Q\}$ is a normal subalgebra (congruence class) in the direct square $Q\times Q$.

\begin{proposition}\cite[\S3.2]{RTIGFQ},\cite[Prop.~11.1]{IQTR}\label{P:CePiFrGp}
Let $\langle R,L\rangle$ be the free group on the two-element set $\{R,L\}$. Then central piques are equivalent to right modules over the integral group algebra $\mathbb Z\langle R,L\rangle$, with
\begin{align}\label{E:centmult}
x\cdot y&=xR+yL\,,\\ \label{E:centrtdv}
x/y&=xR^{-1}-yLR^{-1}\,,\\ \label{E:centltdv}
x\backslash y&=-xRL^{-1}+yL^{-1}\,,\\
e&=0
\end{align}
as the multiplication, right division, left division, and pointed idempotent.
\end{proposition}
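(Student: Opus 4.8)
The statement is an equivalence, so the plan is to set up the objectwise correspondence in both directions and then observe that it respects morphisms. I first record the convenient reformulation that, because $\langle R,L\rangle$ is \emph{free}, a right $\mathbb Z\langle R,L\rangle$-module is nothing more than an abelian group $(M,+,0)$ together with two automorphisms $R,L\colon M\to M$, with no relations to impose.

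For the direction from modules to central piques, given such an $(M,+,0,R,L)$ I would define $\cdot,/,\backslash$ by \eqref{E:centmult}--\eqref{E:centltdv} and set $e=0$. Verifying the quasigroup identities \eqref{E:QgpIdens} is then a routine linear cancellation using the invertibility of $R$ and $L$; for instance $(x/y)\cdot y=(xR^{-1}-yLR^{-1})R+yL=x$, and similarly for the remaining three. Idempotence is immediate from $0\cdot 0=0R+0L=0$. The only structural point is \emph{centrality}, and here I would exhibit the difference map $\delta\colon M\times M\to M$, $(x_1,x_2)\mapsto x_1-x_2$, and note that it preserves multiplication, since $\delta\big((x_1,x_2)\cdot(y_1,y_2)\big)=(x_1-x_2)R+(y_1-y_2)L=\delta(x_1,x_2)\cdot\delta(y_1,y_2)$. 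As remarked in \S\ref{SS:quasigps}, a map of quasigroups preserving $\cdot$ preserves all three operations, so $\ker\delta$ is a congruence on $M\times M$; its class through the origin is precisely the diagonal $\widehat M=\delta^{-1}(0)$, exhibiting $M$ as a central pique.

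The substance of the proof is the reverse direction. Quasigroups form a Mal'cev variety, so the centrality of a pique $Q$---the diagonal being a congruence class of $Q\times Q$---is exactly the statement that $Q$ is \emph{abelian} in the sense of the modular commutator. I would then invoke the classical fact that an abelian algebra in a Mal'cev variety is \emph{affine}: there is an abelian group operation $+$ on the carrier for which every basic operation is affine. Normalizing the zero of this group to be the pointed idempotent $e$, the multiplication must have the shape $x\cdot y=xR+yL+c$ with endomorphisms $R,L$ and a constant $c$; idempotence $e\cdot e=e$ at $e=0$ forces $c=0$, and bijectivity of the left and right translations forces $R,L$ to be automorphisms. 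One may take $R=R(e)$ and $L=L(e)$ explicitly, since $x\cdot e=xR$ and $e\cdot y=yL$. Solving $x\cdot y=z$ for the two remaining variables then yields the division formulas \eqref{E:centrtdv} and \eqref{E:centltdv} uniquely, so $Q$ arises from the module $(Q,+,0,R,L)$.

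Finally, to promote this to an equivalence of categories I would check that a pique homomorphism $f\colon Q\to Q'$, preserving $e$ and $\cdot$, is the same datum as a module homomorphism: $f$ fixes $0=e$, is additive because $+$ is definable from the affine structure built out of $\cdot,/,\backslash$ and $e$, and commutes with $R=R(e)$ and $L=L(e)$ because it preserves $\cdot$ and $e$. The main obstacle is the reverse direction, and within it the single nontrivial input is the affineness of abelian algebras in a Mal'cev variety; granted that, every remaining verification is a direct linear computation.
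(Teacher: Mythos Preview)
The paper does not supply its own proof of this proposition; it is simply recalled from the cited sources \cite[\S3.2]{RTIGFQ} and \cite[Prop.~11.1]{IQTR}. Your argument is correct and is essentially the standard one behind those references: the forward direction is a direct verification, and the reverse direction rests on the characterization of centrality as abelianness in the commutator sense together with the theorem (from \cite{MV}) that abelian algebras in a Mal'cev variety are affine over a module, after which the normalization at $e=0$ pins down $R=R(e)$, $L=L(e)$ and the division formulas. There is nothing to compare against in the present paper, but your identification of the one nontrivial input—the affineness of abelian Mal'cev algebras—is exactly right.
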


\begin{corollary}\label{C:CePiFrGp}
The free central pique over an alphabet $A$ is represented faithfully by the subpique of $\big(\mathbb Z\langle R,L\rangle A,\cdot,/,\backslash,0\big)$ generated by the subset $A$ of the free right $\mathbb Z\langle R,L\rangle$-module $\mathbb Z\langle R,L\rangle A$.
\end{corollary}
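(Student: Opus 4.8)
The plan is to read Proposition~\ref{P:CePiFrGp} not as a bare categorical equivalence but as a \emph{definitional} (term) equivalence of varieties, so that the phrase ``free on $A$'' transfers unambiguously from modules to piques. First I would check that the right $\mathbb Z\langle R,L\rangle$-module structure on the underlying abelian group is recovered by pique terms. Writing the pointed idempotent as $0$, the formulas \eqref{E:centmult}--\eqref{E:centltdv} give $x\cdot 0=xR$, $\ 0\cdot x=xL$, $\ x/0=xR^{-1}$, and $0\backslash x=xL^{-1}$, so the generating actions are derived operations; the sum is then $x+y=(x/0)\cdot(0\backslash y)$, and negation is recovered from $0/x=-xLR^{-1}$ by the term $0\backslash\big((0/x)\cdot 0\big)=-x$. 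Since conversely \eqref{E:centmult}--\eqref{E:centltdv} already express the pique operations as module terms, the two varieties are term-equivalent and share a common underlying-set functor.

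Term-equivalent varieties have the same free algebras on a given set. Hence the free central pique over $A$ has underlying set the free right module $\mathbb Z\langle R,L\rangle A$, equipped with exactly the operations $\big(\cdot,/,\backslash,0\big)$ of Proposition~\ref{P:CePiFrGp}; in particular the representation is an isomorphism of piques, so faithfulness is immediate.

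It then remains only to match this copy with the subpique generated by $A$. By construction $A$ is a set of free generators, so its image generates the whole representation; this can also be seen directly from the closure formulas above, which show that any subpique containing $A$ is stable under right multiplication by $R^{\pm1}$ and $L^{\pm1}$, hence under all of $\langle R,L\rangle$, and under $x+y=(x/0)\cdot(0\backslash y)$ together with negation. Such a subpique is a $\mathbb Z\langle R,L\rangle$-submodule containing the basis $A$, and therefore equals all of $\mathbb Z\langle R,L\rangle A$. Thus the subpique generated by $A$ is the entire free module and is a faithful copy of the free central pique over $A$.

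The step I expect to require the most care is the first one: verifying that the \emph{full} module structure—most delicately, addition and negation, not merely the $R,L$-actions—is realized by pique terms. This is precisely what upgrades the equivalence of Proposition~\ref{P:CePiFrGp} to a term equivalence and licenses the transport of free objects; once it is in hand, everything after is formal. (If one instead argued by hand, the crux would migrate to proving injectivity of the canonical map from the abstract free central pique into $\mathbb Z\langle R,L\rangle A$, which amounts to the same computation.)
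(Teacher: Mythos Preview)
Your argument is correct. The paper gives no proof for this corollary: it is simply recorded as an immediate consequence of Proposition~\ref{P:CePiFrGp}. What you have done is spell out the mechanism the paper leaves implicit, namely that the equivalence of Proposition~\ref{P:CePiFrGp} is a definitional (term) equivalence, so that free objects on the same set coincide. Your derived-term computations ($x\cdot 0=xR$, $x/0=xR^{-1}$, $x+y=(x/0)\cdot(0\backslash y)$, and $-x=0\backslash\big((0/x)\cdot 0\big)$) are the standard ones and check out. As a small bonus you also make explicit that the subpique generated by $A$ is all of $\mathbb Z\langle R,L\rangle A$, something the paper's wording does not assert directly.
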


Although the models of free central piques given by Corollary~\ref{C:CePiFrGp} are faithful, they do have a serious disadvantage when it comes to studying triality symmetry: the polynomials in $R^{\pm1},L^{\pm1}$ giving the divisions are not homogeneous. This complicates any attempt to enumerate free central pique words by degrees of the representing polynomials.

\subsection{The homogeneous representation}

A homogeneous representation of central pique words is initiated by taking the operations as defined in \eqref{E:ratlinSS}, recalling the relationship \eqref{E:theta123}. However, we do make substitutions
$$
\theta_1\mapsto X_2\,\quad
\theta_2\mapsto X_3\,\quad
\theta_3\mapsto X_1
$$
and hence rewrite \eqref{E:theta123} as
\begin{equation}\label{E:cyclerel}
X_3X_2X_1=X_2X_1X_3=X_1X_3X_2=-1 \,.
\end{equation}
We describe the $X_i$ as the \emph{coefficient variables}. The quasigroup operations from Figure~\ref{F:trialops} are displayed in Figure~\ref{F:homogops}. In connection with \eqref{E:cyclerel}, it is worth recordiug the following observation.

\begin{lemma}\label{L:cyclerel}
In \eqref{E:cyclerel}, suppose that the $X_i$ are invertible. Then the equation $X_iX_{i-1}X_{i-2}=-1$ for any one of the cyclic products $X_iX_{i-1}X_{i-2}=-1$ (with indices taken modulo $3$) is equivalent to the equation of each of the other two cyclic products with $-1$.
\end{lemma}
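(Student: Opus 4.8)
The plan is to exploit two structural facts about the endomorphism ring of the abelian group $P$: that the scalar $-1$ is central, and that the invertibility hypothesis makes conjugation available. The content of the lemma is precisely that the three cyclic rotations appearing in \eqref{E:cyclerel} all take the value $-1$ as soon as any single one of them does, so I would aim to show that each rotation is obtained from the previous one by conjugation by a unit.

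First I would fix notation, writing the generic cyclic product as $X_iX_{i-1}X_{i-2}$ with indices read modulo $3$, so that the three products in \eqref{E:cyclerel} are exactly the three rotations of one word in the $X_i$. The key observation is that any two cyclic rotations of a product of units are conjugate: abbreviating $u=X_i$, $v=X_{i-1}$, $w=X_{i-2}$, one has
$$
vwu = u^{-1}(uvw)u
\qquad\text{and}\qquad
wuv = (uv)^{-1}(uvw)(uv),
$$
where the prefixes $u$ and $uv$ are units because all the $X_j$ are assumed invertible.

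Next I would invoke the centrality of $-1$. Suppose one rotation, say $uvw$, equals $-1$. Conjugating by the unit $u$ gives $vwu = u^{-1}(-1)u = -1$, since $-1$ commutes with $u^{-1}$; conjugating by the unit $uv$ gives $wuv = (uv)^{-1}(-1)(uv) = -1$ in the same way. Hence the single hypothesis $uvw=-1$ forces the remaining two rotations to equal $-1$ as well. Since this argument may be started from whichever rotation is assumed to equal $-1$, the three equations are pairwise equivalent, which is the assertion of the lemma.

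I do not anticipate a genuine obstacle here; the proof is a two-line conjugation argument. The only points requiring care are that conjugation is legitimate at all, which is exactly what the invertibility hypothesis supplies, and that conjugation fixes the value $-1$, which is exactly the centrality of $-1$ in the endomorphism ring. Both hypotheses are used essentially: without invertibility the three rotations of a triple product need not coincide, so the equivalence genuinely requires the units assumption recorded in the statement.
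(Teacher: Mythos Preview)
Your conjugation argument is correct and is exactly the standard proof one would expect; the paper actually states this lemma without proof, treating it as an immediate observation. One small contextual slip: by this point in Section~6 the $X_i$ are abstract coefficient variables in the algebra $S$ of \eqref{E:PresAlgS} (or matrices, as in \S\ref{SS:16shrtst}), not endomorphisms of the abelian group $P$ from Section~\ref{S:SeSymAlg}, so your appeal to ``the endomorphism ring of $P$'' should be replaced by the observation that $-1$ is central in any unital ring --- but this does not affect the argument.
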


\vskip .2in
\begin{figure}[bht]
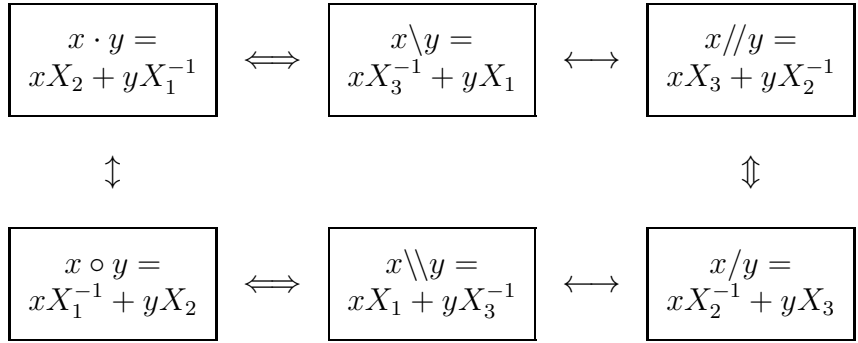

$$
\begin{matrix}
\boxed{\phantom{\Bigg|}
\begin{matrix}
x\cdot y=\\
xX_2+yX_1^{-1}
\end{matrix}
\phantom{\Bigg|}}
&\Longleftrightarrow
&\boxed{\phantom{\Bigg|}
\begin{matrix}
x\backslash y=\\
xX_3^{-1}+yX_1
\end{matrix}
\phantom{\Bigg|}}
&\longleftrightarrow
&\boxed{\phantom{\Bigg|}
\begin{matrix}
x/\negthinspace/y=\\
xX_3+yX_2^{-1}
\end{matrix}
\phantom{\Bigg|}}
\\
&            &       &                   & \\
\updownarrow   &            &       &                   &\Updownarrow   \\
&            &       &                   & \\
\boxed{\phantom{\Bigg|}
\begin{matrix}
x\circ y=\\
xX_1^{-1}+yX_2
\end{matrix}
\phantom{\Bigg|}}
&\Longleftrightarrow
&\boxed{\phantom{\Bigg|}
\begin{matrix}
x\backslash\negthinspace\backslash y=\\
xX_1+yX_3^{-1}
\end{matrix}
\phantom{\Bigg|}}
&\longleftrightarrow
&\boxed{\phantom{\Bigg|}
\begin{matrix}
x/y=\\
xX_2^{-1}+yX_3
\end{matrix}
\phantom{\Bigg|}}
\end{matrix}
$$
\vskip .3in
\caption{Homogeneous linear representation of quasigroup operations.
}\label{F:homogops}
\end{figure}
In Figure~\ref{F:trialops}, $\leftrightarrow$ stood for right multiplication by $\sigma$, while $\Leftrightarrow$ stood for right multiplication by $\tau$. Now, in Figure~\ref{F:homogops}, $\leftrightarrow$ means inverting the coefficient variables and applying $(1\ 2)$ to their suffices. Similarly, $\Leftrightarrow$ means inverting the coefficient variables and applying $(2\ 3)$ to their suffices. These actions may be displayed explicitly as
\begin{align*}
(1\ 2)\colon
X_1\mapsto X_2^{-1}, \
X_2\mapsto X_1^{-1}, \
X_3\mapsto X_3^{-1}\phantom{\,.}
\end{align*}
and
\begin{align*}
(2\ 3)\colon
X_1\mapsto X_1^{-1}, \
X_2\mapsto X_3^{-1}, \
X_3\mapsto X_2^{-1}\, .
\end{align*}
The relations \eqref{E:cyclerel} are invariant under these actions, demonstrating the homogeneity of the representation under the triality group $S_3$.

\subsection{Faithfulness of the homogeneous representation}

We now show that the free versions of the homogeneous representations discussed in the previous section are faithful. Equating the coefficients from the top row of Figure~\ref{F:homogops} with the coefficients from \eqref{E:centmult}--\eqref{E:centltdv} yields
\begin{equation}\label{E:XifromLR}
X_1=L^{-1}\,,\quad
X_2=R\,,\quad
X_3=-LR^{-1}
\end{equation}
with
\begin{equation}\label{E:XiLRcy-1}
X_3X_2X_1=-LR^{-1}RL^{-1}=-1 \,.
\end{equation}
By \eqref{E:XifromLR}, the $X_i$ here are invertible. Then by Lemma~\ref{L:cyclerel}, the relation \eqref{E:XiLRcy-1} implies all the equations of \eqref{E:cyclerel}. Thus the homogeneous representation by coefficient variables is capable of modeling the faithful representation from \S\ref{SS:CePiFrGp}. We may summarize the preceding observations and refomulate Proposition~\ref{P:CePiFrGp} as follows.

\begin{theorem}[Homogeneous representation of central piques]\label{T:CePiFrGp}
Consider the algebra
\begin{equation}\label{E:PresAlgS}
S=
\mathbb Z\langle X_1,X_2,X_3\mid 1+X_3X_2X_1,1+X_2X_1X_3,1+X_1X_3X_2\rangle
\end{equation}
of polynomials, with integral coefficients, in the non-commuting coefficient variables $X_1,X_2,X_3$, subject to the annihilation of the indicated polynomials. 
\begin{enumerate}
\item[$(\mathrm a)$]
The actions
\begin{align*}
(1\ 2)\colon
X_1\mapsto X_2^{-1}, \
X_2\mapsto X_1^{-1}, \
X_3\mapsto X_3^{-1}\phantom{\,.}
\end{align*}
and
\begin{align*}
(2\ 3)\colon
X_1\mapsto X_1^{-1}, \
X_2\mapsto X_3^{-1}, \
X_3\mapsto X_2^{-1}\, .
\end{align*}
generate a group $S_3$ of automorphisms of $S$.
\item[$(\mathrm b)$]
Central piques are equivalent to right $S$-modules, with
\begin{align}\label{E:ccntmult}
x\cdot y&=xX_2+yX_1^{-1}\,,\\ \label{E:ccntrtdv}
x/y&=xX_2^{-1}+yX_3\,,\\ \label{E:ccntltdv}
x\backslash y&=xX_3^{-1}+yX_1\,,\\
e&=0
\end{align}
as the respective multiplication, right division, left division, and pointed idempotent.
\item[$(\mathrm c)$]
The action of $S_3$ on $S$ from $(\mathrm a)$ induces a triality action on central piques as represented in $(\mathrm b)$.
\end{enumerate}
\end{theorem}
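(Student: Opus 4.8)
The plan is to make the ring isomorphism $S\cong\mathbb Z\langle R,L\rangle$ the backbone of the whole argument, and then read off (a), (b), (c) from it together with the material already assembled before the statement. First I would record that each $X_i$ is a unit of $S$: the defining relation $X_3X_2X_1=-1$ exhibits $-X_3X_2$ as a left inverse of $X_1$, while $X_1X_3X_2=-1$ exhibits it as a right inverse, and cyclically for $X_2,X_3$. Consequently $X_3=-X_1^{-1}X_2^{-1}$, so $S$ is generated by the two units $X_1,X_2$, and the substitution $R\mapsto X_2$, $L\mapsto X_1^{-1}$ defines a surjection $\mathbb Z\langle R,L\rangle\to S$. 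In the opposite direction, \eqref{E:XifromLR} and \eqref{E:XiLRcy-1} give a homomorphism $S\to\mathbb Z\langle R,L\rangle$, which is well defined because, by Lemma~\ref{L:cyclerel}, the single verified relation \eqref{E:XiLRcy-1} forces all of \eqref{E:cyclerel}. A one-line check on generators shows these two maps are mutually inverse, so $S\cong\mathbb Z\langle R,L\rangle$. This simultaneously identifies $S$ with the coefficient ring of Proposition~\ref{P:CePiFrGp} and shows that $S$ is a free-group algebra, in particular that $X_1,X_2,X_3$ are pairwise distinct.

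For (a), I would observe that each prescribed substitution inverts the coefficient variables and permutes their suffices, hence carries a cyclic product $X_iX_jX_k$ to the inverse of another cyclic product; since every cyclic product equals $-1$ and $(-1)^{-1}=-1$, each of the three defining polynomials maps into the relation ideal, so the substitution extends to a ring endomorphism of $S$. Each generator is visibly an involution on $X_1,X_2,X_3$, hence an automorphism. Computing the composite $(1\ 2)(2\ 3)$ on generators gives $X_1\mapsto X_3$, $X_2\mapsto X_1$, $X_3\mapsto X_2$, a pure $3$-cycle of the suffices with no surviving inversions, so $\big((1\ 2)(2\ 3)\big)^3=1$. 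Thus $\langle(1\ 2),(2\ 3)\rangle$ is a quotient of the presentation of $S_3$ from Section~\ref{S:triality}; to see that the surjection $S_3\to\langle(1\ 2),(2\ 3)\rangle$ is an isomorphism I would note that the two transpositions remain distinct, since they disagree on $X_2$ (sent to $X_1^{-1}$ versus $X_3^{-1}$) and $X_1\neq X_3$ in $S\cong\mathbb Z\langle R,L\rangle$. A quotient of $S_3$ in which the two transpositions stay distinct is all of $S_3$, giving (a).

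Part (b) is then a transport of Proposition~\ref{P:CePiFrGp} across the isomorphism of the first paragraph. Under $X_2=R$, $X_1^{-1}=L$, $X_3=-LR^{-1}$, and hence $X_3^{-1}=-RL^{-1}$, the formulas \eqref{E:ccntmult}--\eqref{E:ccntltdv} become \eqref{E:centmult}--\eqref{E:centltdv} by direct substitution, and $0$ is fixed; since central piques are equivalent to right $\mathbb Z\langle R,L\rangle$-modules under the latter operations, they are equivalent to right $S$-modules under the former.

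For (c) I would define the induced action by twisting: an automorphism $g$ of $S$ sends a right $S$-module $M$ to the module $M^g$ with the same underlying abelian group and scalar action $m\ast s=m(s^g)$, functorially in $M$, yielding an $S_3$-action on right $S$-modules, i.e. on central piques. It then remains to match this with the triality action of Section~\ref{S:triality}. The decisive check is that twisting the multiplication $xX_2+yX_1^{-1}$ by $(1\ 2)$ produces $x\big(X_2^{(1\ 2)}\big)+y\big(X_1^{-1}\big)^{(1\ 2)}=xX_1^{-1}+yX_2$, which is exactly the opposite operation $x\circ y$ of Figure~\ref{F:homogops}; similarly $(2\ 3)$ produces the box reached by $\Leftrightarrow$ in Figure~\ref{F:trialops}. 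Because these two moves are precisely the generators $\sigma,\tau$ of the triality $S_3$ and permute the six boxes of Figure~\ref{F:homogops} in lockstep with the Cayley diagram \eqref{E:cayls3st}, the twisting action realizes triality. I expect this last identification --- pinning down the variance of the twist and confirming that the two ring automorphisms correspond to $\sigma=(1\ 2)$ and $\tau=(2\ 3)$ acting on the operations exactly as in Figure~\ref{F:trialops} (left multiplication by $\sigma$ being passage to the opposite), rather than to some other labelling --- to be the only genuinely delicate point; everything else is mechanical given the first paragraph.
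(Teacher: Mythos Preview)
Your argument is correct and follows the same line as the paper: the paper does not give a separate formal proof of the theorem, but presents it as a summary of the observations in \S6.2--\S6.3, namely the invariance of the relations \eqref{E:cyclerel} under the two prescribed substitutions, the identification \eqref{E:XifromLR} together with Lemma~\ref{L:cyclerel}, and the match between Figure~\ref{F:homogops} and the triality Cayley diagram. You make exactly these ingredients explicit --- most notably by writing down both directions of the isomorphism $S\cong\mathbb Z\langle R,L\rangle$ (the paper only records one and then asserts in Remark~\ref{R:nonegtiv}(a) that $S$ is an alternative presentation) and by checking that the generated automorphism group is genuinely $S_3$ rather than a proper quotient --- so your write-up is a fleshed-out version of the paper's sketch rather than a different proof.
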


\begin{remark}\label{R:nonegtiv}
(a)
Abstractly, \eqref{E:PresAlgS} is an alternative presentation of the integral free group algebra $\mathbb Z\langle R,L\rangle$ from Proposition~\ref{P:CePiFrGp}.
\vskip 2mm
\noindent
(b)
In comparison with Proposition~\ref{P:CePiFrGp}, note that the integral coefficients of the monomials appearing in central quasigroup words represented by Theorem~\ref{T:CePiFrGp} are all nonnegative.
\end{remark}

\subsection{Representing free quasigroup words}

Consider
\begin{equation}\label{E:TwoWords}
((a_0/a_1).a_2a_3)/(a_4\backslash a_0)
=(a_4.a_2a_3)/a_1
\end{equation}
as an example of an equality between distinct reduced free quasigroup words (in the sense of \cite{JSW}) that holds in the free central quasigroup on the alphabet $\{a_0,\dots,a_4\}$. With the homogeneous representation given by Theorem~\ref{T:CePiFrGp}, the representation of $((a_0/a_1).a_2a_3)/(a_4\backslash a_0)$ is 
\begin{equation}\label{E:len6word}
a_0X_2^{-1}+a_1X_3+a_2X_2X_1^{-1}X_2^{-1}
+a_3X_1^{-2}X_2^{-1}+a_4+a_0X_1X_3\,,
\end{equation}
while the representation for $(a_4.a_2a_3)/a_1$ is
\begin{equation}\label{E:len4word}
a_1X_3+a_2X_2X_1^{-1}X_2^{-1}+a_3X_1^{-2}X_2^{-1}+a_4\,.
\end{equation}
Since the first term in~\ref{E:len6word} is equivalent to $-a_0X_1X_3$, the homogeneous representations of the two words are equal.

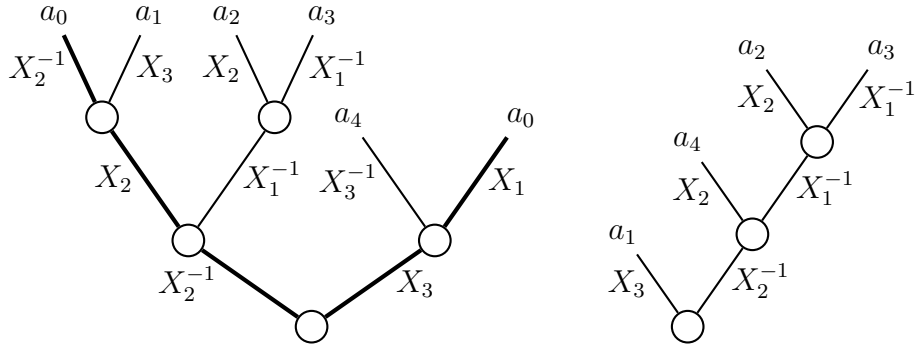
\begin{figure}[bht]
\centering
\begin{tikzpicture}
\node (1) at (0,0) [knode] {};
\node (2) at ($(1)+(35:2)$) [knode]{};
\node (3) at ($(1)+(145:2)$) [knode] {};
\node (4) at ($(3)+(55:2)$) [knode] {};
\node (5) at ($(3)+(125:2)$) [knode] {};
\node (6) at ($(5)+(65:1.5)$)  {$a_1$};
\node (7) at ($(5)+(115:1.5)$)  {$a_0$};;
\node (8) at ($(4)+(65:1.5)$)  {$a_3$};
\node (9) at ($(4)+(115:1.5)$)  {$a_2$};
\node (10) at ($(2)+(55:2)$)  {$a_0$};
\node (11) at ($(2)+(125:2)$)  {$a_4$};
\draw[ultra thick] (1) edge node[right] {$\,\,X_3$} (2);
\draw[ultra thick] (1) edge node[left] {$X_2^{-1}\,\,\,\,$} (3);
\draw[thick] (3) edge node[right] {$X_1^{-1}$} (4);
\draw[ultra thick] (3) edge node[left] {$X_2$} (5);
\draw[thick] (5) edge node[right] {$X_3$} (6);
\draw[ultra thick] (5) edge node[left] {$X_2^{-1}$} (7);
\draw[thick] (4) edge node[right] {$X_1^{-1}$} (8);
\draw[thick] (4) edge node[left] {$X_2$} (9);
\draw[ultra thick] (2) edge node[right] {$X_1$} (10);
\draw[thick] (2) edge node[left] {$X_3^{-1}\,$} (11);
\begin{scope}[xshift=5cm]
\node (1) at (0,0) [knode] {};
\node (2) at ($(1)+(55:1.5)$) [knode]{};
\node (3) at ($(1)+(125:1.5)$) {$a_1$};
\node (4) at ($(2)+(55:1.5)$)  [knode]{};
\node (5) at ($(2)+(125:1.5)$)  {$a_4$};
\node (6) at ($(4)+(55:1.5)$)  {$a_3$};
\node (7) at ($(4)+(125:1.5)$)  {$a_2$};
\draw[thick] (1) edge node[right] {$X_2^{-1}$} (2);
\draw[thick] (1) edge node[left]{$X_3$}(3);
\draw[thick] (2) edge node[right]{$X_1^{-1}$} (4);
\draw[thick] (2) edge node[left] {$X_2$} (5);
\draw[thick] (4) edge node[right] {$X_1^{-1}$} (6);
\draw[thick] (4) edge node[left] {$X_2$} (7);
\end{scope}
\end{tikzpicture}
\caption{Two equivalent labelled rooted binary trees.}\label{F:TwoTrees}
\end{figure}

Free quasigroup words are represented by so-called \emph{parsing trees}, rooted binary trees in which generators populate the leaves, and where quasigroup operations are specified at each internal node \cite[\S2.3, \S3.3]{JSW}. With the homogeneous representation of Theorem~\ref{T:CePiFrGp}, the label of an internal node is replaced by labels for the two edges meeting at that node. In accord with Theorem~\ref{T:CePiFrGp}(b), these edges are labelled by the corresponding coefficient variables or their inverses.

The two trees in Figure~\ref{F:TwoTrees} respectively represent the words from \eqref{E:TwoWords}. From each leaf, following the edges down to the root vertex produces a monomial multiple in the homogeneous representation, contributing to the respective representations \ref{E:len6word} and~\ref{E:len4word}. The equivalence of the two words may be observed from Figure~\ref{F:TwoTrees}, noting the collapse of the subtree on the left hand side, identified by the thicker edges, which leads to the elimination of the argument $a_0$.

\subsection{Argument elimination}\label{SS:ArgtElim}

The pattern of collapse observed in Figure~\ref{F:TwoTrees} may be generalized, as displayed in Figure~\ref{F:PatOfCol}. Here, $1\le i\le 3$, and the indices on the coefficient variables are to be interpreted modulo $3$. The symbol $a_r$ denotes a particular letter from the generating alphabet. The dashed lines denote paths, of nonnegative even length, such that the product of the (possibly empty) ordered set of edge labels is $1$.
\begin{figure}[htb]
$$
\xymatrix{
a_r
\ar@{--}[d]
&
&
a_r
\ar@{--}[d]
&
&
&
a_r
\ar@{--}[d]
&
&
a_r
\ar@{--}[d]
\\
\bigcirc
\ar@{-}[ddd]_{X_i^{-1}}
&
&
\bigcirc
\ar@{-}[d]^{X_{i-1}}
&
&
&
\bigcirc
\ar@{-}[ddd]_{X_i}
&
&
\bigcirc
\ar@{-}[d]^{X_{i+1}^{-1}}
\\
&
&
\bigcirc
\ar@{--}[d]
&
&
&
&
&
\bigcirc
\ar@{--}[d]
\\
&
&
\bigcirc
\ar@{-}[d]^{X_{i+1}}
&
&
&
&
&
\bigcirc
\ar@{-}[d]^{X_{i-1}^{-1}}
\\
\bigcirc
\ar@{--}[dr]
&
&
\bigcirc
\ar@{--}[dl]
&
&
&
\bigcirc
\ar@{--}[dr]
&
&
\bigcirc
\ar@{--}[dl]
\\
&
\bigcirc
&
&
&
&
&
\bigcirc
&
}
$$
\caption{The patterns of argument elimination.}\label{F:PatOfCol}
\end{figure}
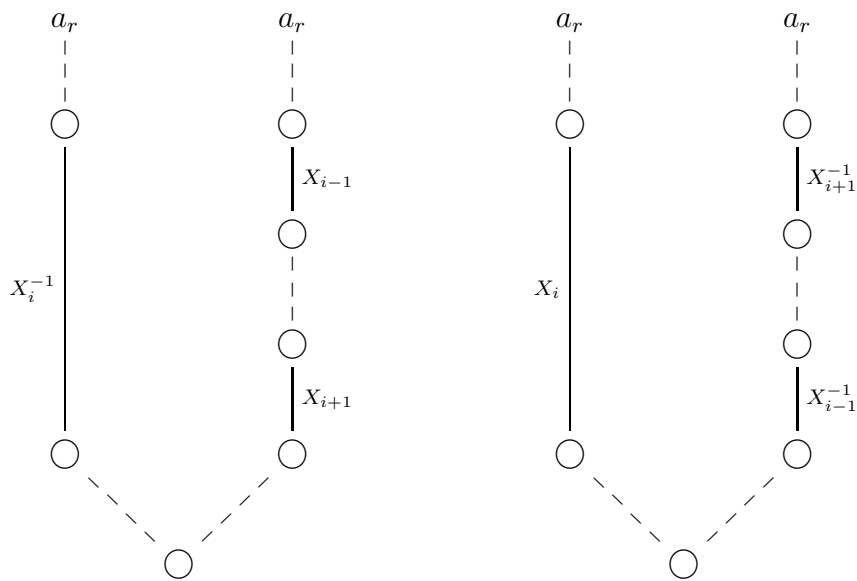
The thicker lines in Figure~\ref{F:TwoTrees} provide an instance of the left-hand pattern in Figure~\ref{F:PatOfCol}, with $i=2$ and $r=0$. All three dashed paths to the right of the root have zero length, while either one of the dashed paths to the left of the root may be taken to have zero length, the other dashed path to the left then having length two.

\subsection{A matrix implementation}\label{SS:16shrtst}

As an illustration, consider the matrix representation
\begin{equation}\label{E:XiinSL2Z}
X_1=
\begin{bmatrix}
1 &0\\
-\sqrt 5 &1
\end{bmatrix}\,,\quad
X_2=
\begin{bmatrix}
1 &\sqrt 5\\
0 &1
\end{bmatrix}\,,\quad
X_3=
\begin{bmatrix}
-1 &\sqrt 5\\
-\sqrt 5 &4
\end{bmatrix}\,.
\end{equation}
The matrices \eqref{E:XiinSL2Z} satisfy
$
X_3X_2X_1=-1\,.
$
By Lemma~\ref{L:cyclerel}, it follows that they satisfy all the relations of \eqref{E:cyclerel}. Consider the central pique $(\mathbb R^2,\cdot,[0\ 0])$ with the operations of Theorem~\ref{T:CePiFrGp}. The sixteen shortest words in the central subquasigroup generated by $a=[\sqrt 2\quad 2\,]$ are displayed in Figure~\ref{F:16shrtst}. Exact triality symmetry of the words is reflected within the figure as an approximate geometric symmetry.

\begin{figure}[thb]
\includegraphics[width=140mm,height=120mm]{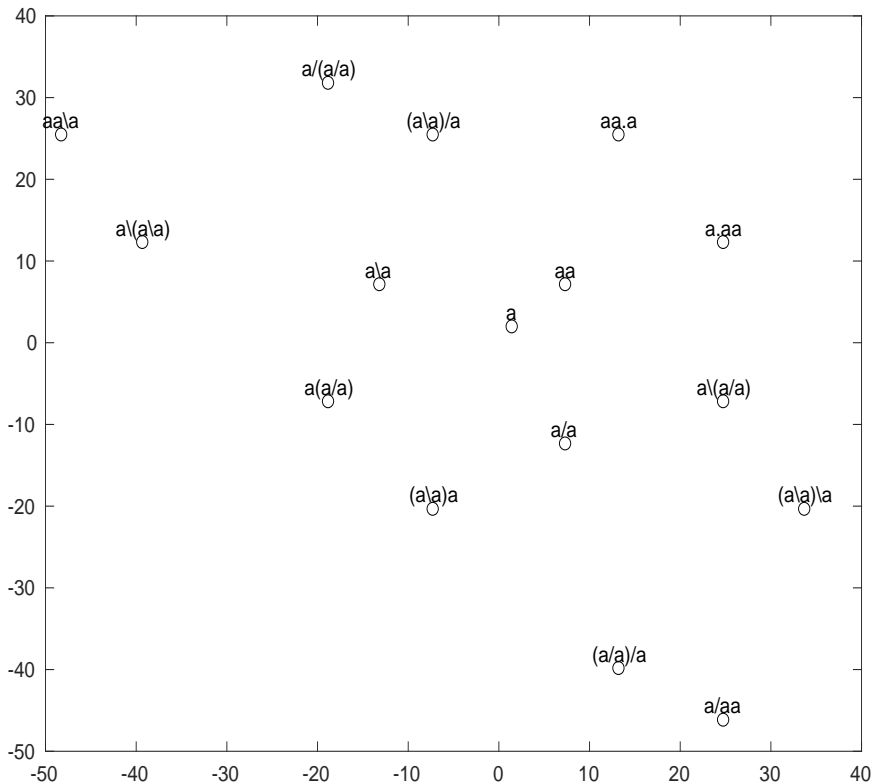}
\caption{The sixteen shortest words in a central quasigroup.}\label{F:16shrtst}
\end{figure}



\begin{thebibliography}{99}


\bibitem{BL}
G, Birkhoff, J.D. Lipson,
{Heterogeneous algebras},
\textit{J. Comb. Th.}
\textbf{8} (1970), 115--133.


\bibitem{GM}
J.A. Goguen and J. Meseguer,
{Completeness of many-sorted equational logic},
\textit{Houston J. Math.}
\textbf{11} (1985), 307--334.

\bibitem{Gv}
A.A. Gvaramiya,
{The category of quasigroups with homotopies} (Russian),
\textit{Comptes Rendues de l'Acad\'{e}mie Bulgare des Sciences} \textbf{38} (1985), 663--666.

\bibitem{GvPl}
A.A. Gvaramiya, B.I. Plotkin,
{The homotopies of quasigroups and universal algebras},
pp. 89--94 in
\textit{Universal Algebra and Quasigroup Theory}
(A.B. Romanowska and J.D.H. Smith, eds.),
Heldermann, Berlin, 1992.

\bibitem{HaNa}
J.I. Hall and G.P. Nagy,
``{On Moufang $3$-nets and groups with triality}",
\textit{Acta Sci. Math. (Szeged)} \textbf{67} (2001), 675--685.

\bibitem{Hi}
P.J. Higgins,
{Algebras with a scheme of operators},
\textit{Math. Nachr.} \textbf{27} (1963), 115--132.

\bibitem{Higman}
G. Higman,
book review of P.M. Cohn, ``Universal Algebra,''
\textit{J. London Math. Soc.} \textbf{41} (1966), p.760:
\texttt{doi.org/10.1112/jlms/s1-41.1.760a}

\bibitem{Higman2}
G. Higman,
{Homogeneous relations},
\textit{Quart. J. Math.} \textbf{28} (1977), 31-–39.

\bibitem{ImKoSm}
B. Im, H.-J. Ko, J.D.H. Smith,
{Semisymmetrizations of abelian group isotopes},
Taiwanese J. Math. {11} (2007), 1529--1524.

\bibitem{James}
I.M. James,
``{Quasigroups and topology},"
\textit{Math. Zeitschr.} \textbf{84} (1964), 329--342.

\bibitem{Lug}
H. Lugowski,
\textit{Grundz\"{u}ge der Universellen Algebra},
Teubner, Leipzig, 1976.

\bibitem{Pl1}
J. P\l onka,
{Diagonal algebras},
\textit{Fund. Math.} \textbf{58} (1966), 300--321.

\bibitem{Pl2}
J. P\l onka,
{Remarks on diagonal and generalized diagonal algebras},
\textit{Colloq. Math.} \textbf{15} (1966), 19--23.

\bibitem{RS}
A.B. Romanowska, J.D.H. Smith,
\textit{Modes},
World Scientific, Singapore, 2002.

\bibitem{Rosenberg}
I.G. Rosenberg,
{\"{U}ber die funktionale Vollst\"{a}ndigkeit in den mehrwertigen Logiken. Struktur der Funktionen von mehreren Ver\"{a}nderlichen auf endlichen Mengen},
\textit{Rozpravy \v{C}eskoslovensk\'{e} Akad. V\v{e}d. \v{R}ada Mat. P\v{r}\'{i}rod. V\v{e}d} \textbf{80}, no. 4, (1970).

\bibitem{Sade57a}
A. Sade,
{Quasigroupes ob\'eissant \`a certaines lois},
\textit{Rev. Fac. Sci. Univ. Istanbul, Ser. A} \textbf{22} (1957), 151--184.

\bibitem{MV}
J.D.H. Smith,
{Mal'cev Varieties},
Springer, Berlin, 1976.

\bibitem{RTIGFQ}
J.D.H. Smith,
{Representation Theory of Infinite Groups and Finite Quasigroups},
Les Presses de l'Universit\'{e} de Montr\'{e}al, Montreal, 1986.

\bibitem{Sm63}
J.D.H. Smith,
{Homotopy and semisymmetry of quasigroups},
\textit{Alg. Univ.} \textbf{38} (1997), 175--184

\bibitem{IQTR}
J.D.H. Smith,
{An Introduction to Quasigroups and Their Representations},
Chapman and Hall/CRC, Boca Raton, FL, 2007.

\bibitem{Sm130}
J.D.H. Smith,
{Evans' Normal Form Theorem revisited},
\textit{Int. J. Alg. \& Comp.} \textbf{17} (2007), 1577--1592.

\bibitem{SmQHSRA}
J.D.H. Smith,
{Quasigroup homotopies, semisymmetrization, and reversible automata},
Internat. J. Algebra Comput. {18} (2008), 1203--1221.

\bibitem{SRPMA}
J.D.H. Smith, A.B. Romanowska,
\textit{Post-Modern Algebra},
Wiley, New York, NY, 1999.

\bibitem{JSW}
J.D.H. Smith, S.G. Wang,
``{On the enumeration and asymptotic growth of free quasigroup words}'',
preprint (2019),
\texttt{arXiv:1910.09749 [math.CO]}.

\end{thebibliography}
\end{document}